\newtheorem{theorem}{Theorem}[section]
\newtheorem{lemma}[theorem]{Lemma}
\newtheorem{proposition}[theorem]{Proposition}
\newtheorem{remark}[theorem]{Remark}
\newtheorem{question}[theorem]{Question}
\theoremstyle{definition}
\def\R{{\mathbb R}}
\def\e{\varepsilon}
\theoremstyle{remark}
\newtheorem*{note*}{Note}
\numberwithin{equation}{section}
\newcommand{\rank}{\mathop{\operator@font rank}}
\newcommand{\conv}{\mathop{\operator@font conv}}
\newcommand{\vol}{\mathop{\operator@font vol}}
\newcommand{\onetagright}{\tagsleft@false}
\newcommand{\ls}{\leqslant}
\newcommand{\gr}{\geqslant}
\renewcommand{\epsilon}{\varepsilon}
\newcommand{\prend}{$\quad \hfill \Box$}
\begin{document}
\small

\title{\bf On the average volume of sections of convex bodies}

\medskip

\author{S.\ Brazitikos, S.\ Dann, A.\ Giannopoulos and A.\ Koldobsky}

\date{}

\maketitle

\begin{abstract}
\footnotesize The average section functional ${\rm as}(K)$ of a centered convex body in ${\mathbb R}^n$ is the average volume of
the central hyperplane sections of $K$:
\begin{equation*}{\rm as}(K)=\int_{S^{n-1}}|K\cap \xi^{\perp }|\,d\sigma (\xi ).\end{equation*}
We study the question if there exists an absolute constant $C>0$ such that for every $n$, for every centered convex body $K$ in ${\mathbb R}^n$ and
for every $1\ls k\ls n-1$,
\begin{equation*}{\rm as}(K)\ls C^k|K|^{\frac{k}{n}}\,\max_{E\in {\rm Gr}_{n-k}}{\rm as}(K\cap E).\end{equation*}
We observe that the case $k=1$ is equivalent to the hyperplane conjecture. We show that this inequality holds true
in full generality if one replaces $C$ by $CL_K$ or $Cd_{\rm ovr}(K,{\cal{BP}}_k^n)$,
where $L_K$ is the isotropic constant of $K$ and $d_{\rm ovr}(K,{\cal{BP}}_k^n)$ is the outer volume ratio distance from $K$ to
the class ${\cal{BP}}_k^n$ of generalized $k$-intersection bodies. We also compare ${\rm as}(K)$
to the average of ${\rm as}(K\cap E)$ over all $k$-codimensional sections of $K$. We examine separately the dependence of the constants
on the dimension in the case where $K$ is in some of the classical positions as well as the natural lower dimensional analogue
of the average section functional.
\end{abstract}

%%%%%%%%%%%%%%%%%%%%%%%%%%%%%%%%%%%%%%%%%%%%%%%%%%%%%%%%%%%%%%%%%%%%%%%%%%%%%%%%%%%%%%%%%%%%%%%%%%%%%%%%%%%%%%%%%%%%%%%%%%%%%%%%%%%%%%%%%%%%%%%%%%
\section{Introduction}
%%%%%%%%%%%%%%%%%%%%%%%%%%%%%%%%%%%%%%%%%%%%%%%%%%%%%%%%%%%%%%%%%%%%%%%%%%%%%%%%%%%%%%%%%%%%%%%%%%%%%%%%%%%%%%%%%%%%%%%%%%%%%%%%%%%%%%%%%%%%%%%%%%

Let $K$ be a convex body in ${\mathbb R}^n$, with barycenter at the origin (we call these bodies centered).
We denote by ${\rm as}(K)$ the average volume of the central hyperplane sections of $K$:
\begin{equation}\label{eq:intro-1}{\rm as}(K)=\int_{S^{n-1}}|K\cap \xi^{\perp }|\,d\sigma (\xi ),\end{equation}
where $|\cdot |$ denotes volume in the appropriate dimension, $\xi^{\perp }$ is the subspace perpendicular to $\xi $, and $\sigma $
is the rotationally invariant probability measure on $S^{n-1}$. More generally, for any $1\ls r\ls n-1$ we define
\begin{equation}\label{eq:intro-2}{\rm as}_r(K)=\int_{{\rm Gr}_{n-r}}|K\cap E|\,d\nu_{n-r}(E),\end{equation}
where $\nu_{n-r}$ is the Haar probability measure on the Grassmannian ${\rm Gr}_{n-r}$ of $(n-r)$-dimensional subspaces
of ${\mathbb R}^n$. Thus, ${\rm as}_r(K)$ is the average volume of $r$-codimensional central sections of $K$; note that
${\rm as}(K)={\rm as}_1(K)$.

The fourth named author proved in \cite{Koldobsky-2013} that if $K$ is an intersection body in ${\mathbb R}^n$ (see Section \ref{notation}
for definitions and background information) then
\begin{equation}\label{eq:intro-3}{\rm as}(K)\ls b_{n,1}\,|K|^{\frac{1}{n}}\max_{\xi\in S^{n-1}}{\rm as}(K\cap\xi^{\perp }),\end{equation}
where
$$b_{n,1}:=\frac{\omega_{n-1}}{\omega_{n-2}\omega_n^{\frac{1}{n}}}\simeq 1$$
(and $\omega_m$ denotes the volume of the Euclidean unit ball $B_2^m$ in ${\mathbb R}^m$). Whenever we write $a\lesssim b$ we
mean that there exists an absolute constant $c>0$ such that $a\ls cb$, and whenever we write $a\simeq b$, we mean that $a\lesssim b$ and $b\lesssim a$.
Note that \eqref{eq:intro-3} is sharp: it becomes equality if $K=B_2^n$.

The purpose of this article is to discuss similar inequalities for the average volume of hyperplane sections of an arbitrary centered convex body $K$ in ${\mathbb R}^n$. More precisely, we study the following question.

\begin{question}\label{question-intro}Let $1\ls k<n$ and define $\gamma_{n,k}$ as the smallest constant $\gamma >0$ for which the following holds true:
for every centered convex body $K$ in ${\mathbb R}^n$ we have
\begin{equation}\label{eq:intro-4}{\rm as}(K)\ls \gamma^k|K|^{\frac{k}{n}}\,\max_{E\in {\rm Gr}_{n-k}}{\rm as}(K\cap E).\end{equation}
Is it true that $\sup_{n,k}\gamma_{n,k}<\infty $?
\end{question}

In Section \ref{ovr} we generalize \eqref{eq:intro-3} using as a parameter the outer volume ratio distance
$d_{\rm ovr}(K,{\cal{BP}}_k^n)$ from an origin-symmetric convex body $K$ to the class ${\cal{BP}}_k^n$ of generalized
$k$-intersection bodies. Our estimates are based on the next more general theorem which is valid for the larger class of origin-symmetric star bodies 
in ${\mathbb R}^n$ and for any even continuous density on $\R^n$.

\begin{theorem}\label{th:intro-2}Let $1\ls k\ls n-1$, let $K$ be an origin-symmetric star body in $\R^n$, and let $f$ be
a non-negative even continuous function on $\R^n.$ Then
\begin{equation}\label{eq:intro-5} \int_{S^{n-1}} \rho_K^{n-1}(\theta) f(\rho_K(\theta)\theta)\ d\theta
\ls c_{n,k}^k \ d_{\rm ovr}^k (K,{\cal{BP}}_k^n)\ |K|^{\frac kn}
\max_{E\in {\rm Gr}_{n-k}} \int_{S^{n-1}\cap E} \rho_K^{n-k-1}(\theta)f(\rho_K(\theta)\theta)\ d\theta.
\end{equation}
\end{theorem}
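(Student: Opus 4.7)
The plan is to split off an extra factor of $\rho_K^k$ on the left-hand side of \eqref{eq:intro-5}, bound it by $\rho_D^k$ for a generalized $k$-intersection body $D\supseteq K$ that nearly realizes the outer volume ratio, and then invoke a weighted slicing inequality for $\rho_D^k$.

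Fix $\varepsilon>0$ and pick $D\in\mathcal{BP}_k^n$ with $K\subseteq D$ and $|D|^{1/n}\leq (1+\varepsilon)\,d_{\rm ovr}(K,\mathcal{BP}_k^n)\,|K|^{1/n}$. The inclusion gives $\rho_K(\theta)\leq \rho_D(\theta)$ for every $\theta\in S^{n-1}$, so
$$\rho_K^{n-1}(\theta)\,f(\rho_K(\theta)\theta)\;=\;h(\theta)\,\rho_K^k(\theta)\;\leq\;h(\theta)\,\rho_D^k(\theta),$$
where $h(\theta):=\rho_K^{n-k-1}(\theta)\,f(\rho_K(\theta)\theta)$ is non-negative, even and continuous on $S^{n-1}$ under the hypotheses on $K$ and $f$. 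Integrating over $S^{n-1}$ reduces \eqref{eq:intro-5} to the weighted slicing estimate
\begin{equation*}
\int_{S^{n-1}}h(\theta)\,\rho_D^k(\theta)\,d\theta\;\leq\;c_{n,k}^k\,|D|^{k/n}\max_{E\in{\rm Gr}_{n-k}}\int_{S^{n-1}\cap E}h(\theta)\,d\theta,\tag{$\star$}
\end{equation*}
after which one plugs in $|D|^{k/n}\leq (1+\varepsilon)^k d_{\rm ovr}^k(K,\mathcal{BP}_k^n)|K|^{k/n}$ and lets $\varepsilon\to 0$.

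The heart of the argument is $(\star)$. Since $D\in\mathcal{BP}_k^n$, the Fourier characterization of the class yields a non-negative finite Borel measure $\mu_D$ on ${\rm Gr}_{n-k}$ such that
$$\int_{S^{n-1}}g(\theta)\,\rho_D^k(\theta)\,d\theta\;=\;\int_{{\rm Gr}_{n-k}}\Big(\int_{S^{n-1}\cap E}g(\theta)\,d\theta\Big)\,d\mu_D(E)$$
for every even continuous $g$ on $S^{n-1}$. Pulling the maximum out of the outer integral reduces $(\star)$ to the bound $\mu_D({\rm Gr}_{n-k})\leq c_{n,k}^k\,|D|^{k/n}$. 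Testing the representation against $g\equiv 1$ gives $\mu_D({\rm Gr}_{n-k})=|S^{n-k-1}|^{-1}\int_{S^{n-1}}\rho_D^k\,d\theta$; H\"older's inequality with exponents $n/k$ and $n/(n-k)$ then passes from the $k$th to the $n$th moment of $\rho_D$, and the polar formula $n|D|=\int_{S^{n-1}}\rho_D^n\,d\theta$ closes the estimate, producing the explicit constant $c_{n,k}$.

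The main obstacle is producing the representing measure $\mu_D$: it relies on the Fourier-analytic dictionary for generalized $k$-intersection bodies developed by Koldobsky and collaborators, by which the homogeneous distribution $\|\cdot\|_D^{-k}$ of degree $-k$ on $\mathbb{R}^n$ is a positive combination of ``section densities'' supported on $(n-k)$-dimensional subspaces. Once this representation is in hand, the remaining steps amount to a direct extension of the intersection-body case ($k=1$) treated in \cite{Koldobsky-2013}.
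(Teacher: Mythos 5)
Your proof is correct and follows essentially the same route as the paper: choose a near-optimal $D\in{\cal{BP}}_k^n$ containing $K$, bound $\rho_K^k\ls\rho_D^k$, apply the representing measure $\mu_D$ to the even continuous weight $h(\theta)=\rho_K^{n-k-1}(\theta)f(\rho_K(\theta)\theta)$, and bound $\mu_D({\rm Gr}_{n-k})$ by testing against $g\equiv 1$ and using H\"older together with the polar volume formula, which reproduces exactly the paper's constant $c_{n,k}$. The only differences are cosmetic: the paper reaches the key inequality by differentiating sections of the radial perturbation $K\tilde{+}\varepsilon B_2^n$ in $\varepsilon$, whereas you apply Zhang's defining identity \eqref{eq:notation-16} directly to $h\in C(S^{n-1})$ (a legitimate shortcut), and the existence of $\mu_D$ is the paper's definition of ${\cal{BP}}_k^n$ rather than a Fourier-analytic theorem, so the ``main obstacle'' you flag is not an obstacle here.
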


In the statement above, $\rho_K$ is the radial function of a star body $K$ and we use the notation $d\theta$ for the non-normalized 
measure on the sphere with density $1$. The constant $c_{n,k}$ is given by
$$c_{n,k}^k=\frac{n\omega_n^{\frac{n-k}n}}{(n-k)\omega_{n-k}},$$
and one can check that $c_{n,k}\simeq 1.$

Theorem \ref{th:intro-2} provides a first estimate on the constants $\gamma_{n,k}$ of Question \ref{question-intro}. Choosing $f\equiv 1$
we see that \eqref{eq:intro-5} implies the following.

\begin{theorem}\label{th:intro-3}Let $1\ls k\ls n-1$, and let $K$ be an origin-symmetric star body in ${\mathbb R}^n$.
Then,
\begin{equation}\label{eq:intro-6}{\rm as}(K)\ls b_{n,k}^kd_{\rm ovr}^k(K,{\cal{BP}}_k^n)\, |K|^{\frac{k}{n}}\,
\max_{E\in Gr_{n-k}} {\rm as}(K\cap E).\end{equation}
In other words, $\gamma_{n,k}\ls b_{n,k}d_{\rm ovr}(K,{\cal{BP}}_k^n)$.\end{theorem}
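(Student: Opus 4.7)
The plan is to deduce Theorem \ref{th:intro-3} from Theorem \ref{th:intro-2} by taking $f\equiv 1$ in \eqref{eq:intro-5} and then rewriting the two radial integrals that appear on either side as average section functionals. The whole argument is essentially bookkeeping, and I expect no genuine obstacle beyond keeping track of the spherical normalizations.

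First I would establish the following identity: for any origin-symmetric star body $L$ in an $m$-dimensional Euclidean space (with $m\gr 2$),
$${\rm as}(L)=\frac{\omega_{m-1}}{m\omega_m}\int_{S^{m-1}}\rho_L^{m-1}(\theta)\,d\theta.$$
This is obtained by writing $|L\cap\xi^{\perp}|=\tfrac{1}{m-1}\int_{S^{m-1}\cap\xi^{\perp}}\rho_L^{m-1}(\theta)\,d\theta$ in polar coordinates, integrating over $\xi\in S^{m-1}$ against $d\sigma$, and using the Fubini identity
$$\int_{S^{m-1}}\int_{S^{m-1}\cap\xi^{\perp}}g(\theta)\,d\theta\,d\sigma(\xi)=\frac{(m-1)\omega_{m-1}}{m\omega_m}\int_{S^{m-1}}g(\theta)\,d\theta,$$
which is a manifestation of the self-adjointness of the spherical Radon transform on even functions with respect to the normalized measure $\sigma$.

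Applying this identity twice, with $m=n$, $L=K$, and with $m=n-k$, $L=K\cap E$ (noting that $\rho_{K\cap E}=\rho_K$ on $S^{n-1}\cap E$), gives
$$\int_{S^{n-1}}\rho_K^{n-1}(\theta)\,d\theta=\frac{n\omega_n}{\omega_{n-1}}\,{\rm as}(K)$$
and
$$\int_{S^{n-1}\cap E}\rho_K^{n-k-1}(\theta)\,d\theta=\frac{(n-k)\omega_{n-k}}{\omega_{n-k-1}}\,{\rm as}(K\cap E)$$
for every $E\in{\rm Gr}_{n-k}$.

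Finally, I would substitute both equalities into \eqref{eq:intro-5} with $f\equiv 1$. Using the stated value $c_{n,k}^{k}=n\omega_n^{(n-k)/n}/\bigl((n-k)\omega_{n-k}\bigr)$, the factors of $n$, $n-k$, $\omega_n$ and $\omega_{n-k}$ telescope and I would read off \eqref{eq:intro-6} with the explicit constant
$$b_{n,k}^{k}=\frac{\omega_{n-1}}{\omega_{n-k-1}\,\omega_n^{k/n}},$$
which agrees with the definition of $b_{n,1}$ in \eqref{eq:intro-3} when $k=1$ and satisfies $b_{n,k}\simeq 1$ by the standard asymptotic $\omega_m^{1/m}\simeq\sqrt{2\pi e/m}$.
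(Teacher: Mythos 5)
Your proposal is correct and follows exactly the paper's own route: apply Theorem \ref{th:intro-2} with $f\equiv 1$ and convert the two radial integrals into ${\rm as}(K)$ and ${\rm as}(K\cap E)$ via the polar-coordinate/Radon-transform identities (the paper's \eqref{eq:ovr-7} and \eqref{eq:ovr-8}). Your constant bookkeeping checks out: $c_{n,k}^{k}\cdot\frac{\omega_{n-1}}{n\omega_n}\cdot\frac{(n-k)\omega_{n-k}}{\omega_{n-k-1}}=\frac{\omega_{n-1}}{\omega_{n-k-1}\omega_n^{k/n}}=b_{n,k}^{k}$, which is precisely the "adjustment of the constants" the paper leaves implicit.
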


The constant $b_{n,k}$ in Theorem \ref{th:intro-3} is given by
$$b_{n,k}^k =\frac{\omega_{n-1}}{\omega_{n-k-1} \omega_n^{\frac{k}{n}}},$$
and one can also check that $b_{n,k}\simeq 1$.

\smallskip 

In the case where the body $K$ is convex, the distance $d_{\rm ovr}(K,{\cal{BP}}_k^n)$ was estimated in \cite{KPZ}. In particular, the available bounds for
$d_{\rm ovr}(K,{\cal{BP}}_k^n)$ show that $\gamma_{n,k}$ is bounded by a function of $n/k$, and hence it remains bounded as long as $k$ is
proportional to $n$. More generally, we have:
 
\begin{theorem} \label{th:intro-4}For every origin-symmetric convex body $K$, for every $1\ls k \ls n-1$
and every even non-negative continuous function $f$ on $\R^n$,
\begin{equation}\label{eq:intro-7}\int_{S^{n-1}} \rho_K^{n-1}(\theta) f(\rho_K(\theta)\theta)\ d\theta
\ls \big ( c_1h(n/k)\big )^k \ |K|^{\frac{k}{n}}  \max_{E\in {\rm Gr}_{n-k}} \int_{S^{n-1}\cap E} \rho_K^{n-k-1}(\theta)f(\rho_K(\theta)\theta)\ d\theta,
\end{equation}
where $c_1>0$ is an absolute constant and $h(t)=\sqrt{t}\cdot (\log (et))^{\frac{3}{2}}$, $t\gr 1$. In particular,
\begin{equation}\label{eq:intro-8}\gamma_{n,k}\ls c_1\sqrt{n/k}\,[\log (en/k)]^{\frac{3}{2}}.\end{equation}
\end{theorem}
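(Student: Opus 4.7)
The plan is to deduce Theorem \ref{th:intro-4} directly from Theorem \ref{th:intro-2} by inserting a suitable bound for the outer volume ratio distance $d_{\rm ovr}(K,{\cal BP}_k^n)$ when $K$ is an origin-symmetric \emph{convex} body. Thus I would not redo any integral geometry; the work is entirely in supplying the right geometric inequality for $d_{\rm ovr}$.

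First I would invoke Theorem \ref{th:intro-2} with the given $K$ and $f$ to obtain
\begin{equation*}
\int_{S^{n-1}} \rho_K^{n-1}(\theta) f(\rho_K(\theta)\theta)\, d\theta
\ls c_{n,k}^k\, d_{\rm ovr}^k(K,{\cal BP}_k^n)\, |K|^{k/n}
\max_{E\in {\rm Gr}_{n-k}} \int_{S^{n-1}\cap E} \rho_K^{n-k-1}(\theta)f(\rho_K(\theta)\theta)\, d\theta.
\end{equation*}
Since $c_{n,k}\simeq 1$, the entire content of \eqref{eq:intro-7} reduces to proving that, for every origin-symmetric convex body $K$ in $\R^n$ and every $1\ls k\ls n-1$,
\begin{equation*}
d_{\rm ovr}(K,{\cal BP}_k^n)\ls c_0\,\sqrt{n/k}\,\bigl(\log(en/k)\bigr)^{3/2} = c_0\, h(n/k),
\end{equation*}
for an absolute constant $c_0>0$. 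This is exactly the estimate proved by Koldobsky--Paouris--Zymonopoulou in \cite{KPZ}, so I would simply cite that result; multiplying the two bounds and absorbing constants into $c_1$ yields \eqref{eq:intro-7}.

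For the particular inequality \eqref{eq:intro-8} on $\gamma_{n,k}$, I would specialize $f\equiv 1$. With this choice the left-hand side of \eqref{eq:intro-7} becomes a multiple of $\int_{S^{n-1}}\rho_K^{n-1}(\theta)\,d\theta$, which (after passing to the probability measure $\sigma$) equals $n\,{\rm as}(K)$ up to the normalization factor $n\omega_n$; similarly the inner integral on the right-hand side reproduces $(n-k)\,{\rm as}(K\cap E)$ up to $(n-k)\omega_{n-k}$. The ratio of these normalizations is precisely $c_{n,k}^k/b_{n,k}^k\cdot\text{(constants)}$, so after simplification we recover \eqref{eq:intro-6} with the outer volume ratio factor and hence \eqref{eq:intro-8}.

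The only conceptual obstacle is the input bound $d_{\rm ovr}(K,{\cal BP}_k^n)\ls c_0 h(n/k)$, which is the deep ingredient. Since it is already available in \cite{KPZ}, no new work is required beyond a careful check that the constants combine as advertised and that $c_{n,k},\,b_{n,k}\simeq 1$ so that all $k$-th powers of order-one quantities are absorbed into $c_1^k$. I do not expect any technical difficulty in this bookkeeping.
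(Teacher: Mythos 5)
Your proposal is correct and follows exactly the paper's own route: the authors also prove Theorem \ref{th:intro-4} by combining Theorem \ref{th:intro-2} with the bound $d_{\rm ovr}(K,{\cal{BP}}_k^n)\ls c\sqrt{n/k}\,[\log (en/k)]^{3/2}$ from \cite{KPZ}, and \eqref{eq:intro-8} then follows by taking $f\equiv 1$ as in the derivation of Theorem \ref{th:intro-3}. No gaps; the constant bookkeeping is as routine as you expect.
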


It is also known that for many classes of convex bodies the distance $d_{\rm ovr}(K,{\cal{BP}}_k^n)$ is bounded by an absolute
constant. This includes unconditional bodies, unit balls of subspaces of $L_p,$ and others. Therefore, the restriction of
Question \ref{question-intro} to all these classes has an affirmative answer.

\smallskip

Theorem \ref{th:intro-2} also allows us to prove an analogue of Theorem \ref{th:intro-3} for the quantities ${\rm as}_r(K).$

\begin{theorem}\label{th:intro-5}Let $1\ls k < n-2$ and $1\ls r < n-k$. For any origin-symmetric star body $K$ in $\R^n$ we have
that
\begin{equation}\label{eq:intro-9}{\rm as}_r(K)\ls \phi_{n,k,r}^k d_{\rm ovr}^k(K,{\cal{BP}}_k^n)\, |K|^{\frac{k}{n}}\,
\max_{E\in Gr_{n-k}} {\rm as}_r(K\cap E).\end{equation}
\end{theorem}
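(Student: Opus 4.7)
The plan is to derive Theorem~\ref{th:intro-5} from Theorem~\ref{th:intro-2} via a specific choice of the weight $f$, after reformulating both sides of the target inequality as spherical integrals. For any $(n-r)$-dimensional subspace $F\subset\mathbb{R}^n$, polar integration gives $|K\cap F|=\frac{1}{n-r}\int_{S^{n-1}\cap F}\rho_K^{n-r}(\theta)\,d\theta$. Integrating over $F\in{\rm Gr}_{n-r}$ and using the identity
$$
\int_{{\rm Gr}_{n-r}}\!\int_{S^{n-1}\cap F}g(\theta)\,d\theta\,d\nu_{n-r}(F)=\frac{(n-r)\omega_{n-r}}{n\omega_n}\int_{S^{n-1}}g(\theta)\,d\theta,
$$
which is a standard consequence of the uniqueness of the rotation-invariant measure on $S^{n-1}$, one obtains
$$
{\rm as}_r(K)=\frac{\omega_{n-r}}{n\omega_n}\int_{S^{n-1}}\rho_K^{n-r}(\theta)\,d\theta,
$$
and, applying the same identity inside each $E\in{\rm Gr}_{n-k}$,
$$
{\rm as}_r(K\cap E)=\frac{\omega_{n-k-r}}{(n-k)\omega_{n-k}}\int_{S^{n-1}\cap E}\rho_K^{n-k-r}(\theta)\,d\theta.
$$

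Next I would apply Theorem~\ref{th:intro-2} with $f(x)=|x|^{-(r-1)}$, so that $\rho_K^{n-1}(\theta)f(\rho_K(\theta)\theta)=\rho_K^{n-r}(\theta)$ and $\rho_K^{n-k-1}(\theta)f(\rho_K(\theta)\theta)=\rho_K^{n-k-r}(\theta)$. For $r=1$ this is simply $f\equiv 1$ and the claim reduces to Theorem~\ref{th:intro-3}. For $r\ge 2$ the function $|x|^{-(r-1)}$ is not continuous at the origin, so I would first apply Theorem~\ref{th:intro-2} to the continuous even approximants $f_{\epsilon}(x)=(|x|^2+\epsilon)^{-(r-1)/2}$, which increase pointwise to $|x|^{-(r-1)}$ as $\epsilon\downarrow 0$, and then let $\epsilon\downarrow 0$. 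Since $\rho_K$ is continuous and strictly positive on $S^{n-1}$, monotone convergence handles both sides, and the supremum over the (compact) Grassmannian commutes with the monotone limit because $\sup_E g_{\epsilon}(E)\uparrow\sup_E g(E)$ whenever $g_{\epsilon}\uparrow g$ pointwise.

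Combining the two polar formulas with the resulting inequality yields
$$
{\rm as}_r(K)\ls\phi_{n,k,r}^k\,d_{\rm ovr}^k(K,{\cal{BP}}_k^n)\,|K|^{\frac{k}{n}}\max_{E\in{\rm Gr}_{n-k}}{\rm as}_r(K\cap E),
$$
with
$$
\phi_{n,k,r}^k=\frac{\omega_{n-r}}{\omega_{n-k-r}\,\omega_n^{k/n}},
$$
after cancellation against the constant $c_{n,k}^k=\frac{n\omega_n^{(n-k)/n}}{(n-k)\omega_{n-k}}$ from Theorem~\ref{th:intro-2}; when $r=1$ this specializes to the constant $b_{n,k}^k$ of Theorem~\ref{th:intro-3}, which serves as a consistency check. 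The only substantive obstacle is the regularization step for $r\ge 2$; everything else is algebraic bookkeeping in polar coordinates.
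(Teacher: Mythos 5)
Your proof is correct and follows essentially the same route as the paper: apply Theorem~\ref{th:intro-2} with $f(x)=\|x\|_2^{-(r-1)}$ and translate both sides through the polar formula ${\rm as}_r(K)=\omega_{n-r}\int_{S^{n-1}}\rho_K^{n-r}(\theta)\,d\sigma(\theta)$ (and its analogue inside each $E$), with the constants cancelling exactly to $\phi_{n,k,r}^k$. Your extra regularization step for $r\ge 2$, where $\|x\|_2^{-(r-1)}$ fails to be continuous at the origin, addresses a hypothesis of Theorem~\ref{th:intro-2} that the paper silently glosses over (one could equally just truncate $f$ near the origin, since it is only ever evaluated near the boundary of $K$), and is a welcome tightening rather than a different approach.
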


Here
$$\phi_{n,k,r}^k =\frac{\omega_{n-r}}{\omega_{n-k-r}\omega_n^{\frac kn}},$$
and one can check that $\phi_{n,k,r}\simeq \sqrt{\frac{n}{n-r}}$.

\smallskip 

In Section \ref{iso-bounds} we show that an analogue of \eqref{eq:intro-3} holds true in full generality, up to the value of the isotropic constant of $K$.
In order to state our main result we recall the definition of the isotropic position. A centered convex body $K$ of volume $1$ in ${\mathbb R}^n$ is called isotropic if there exists a constant $L_K>0$ such that
\begin{equation}\label{eq:intro-10}\int_K\langle x,\xi\rangle^2dx =L_K^2\end{equation}
for every $\xi\in S^{n-1}$. Every centered convex body $K$ has an {\sl isotropic position} $T(K)$, $T\in GL(n)$, which is uniquely defined
modulo orthogonal transformations, and hence the {\sl isotropic constant} $L_K$ is an invariant of the linear class of $K$. A well-known
question in asymptotic convex geometry asks if there exists an absolute constant $C>0$ such that $L_K\ls C$ for every $n$ and every centered convex
body $K$ in ${\mathbb R}^n$. The best known upper bound
\begin{equation}\label{eq:intro-11}L_n:=\sup\{L_K:K\;\hbox{isotropic in}\;{\mathbb R}^n\}\ls c\sqrt[4]{n}\end{equation}
is due to Klartag \cite{Klartag-2006} (see also \cite{BGVV-book} for the history of the problem and recent developments in this area). On the other hand,
one always has $L_K\gr L_{B_2^n}\gr c$, where $c>0$ is an absolute constant. In other words, the question is if $L_K\simeq 1$ for all centered
convex bodies.

\begin{theorem}\label{th:intro-6}Let $K$ be a centered convex body in ${\mathbb R}^n$. Then, for every $1\ls k\ls n-1$,
\begin{equation}\label{eq:intro-12}{\rm as}(K)\ls (c_2L_K)^k\,|K|^{\frac{k}{n}}\,\max_{E\in {\rm Gr}_{n-k}}{\rm as}(K\cap E),\end{equation}
where $c_2>0$ is an absolute constant and $L_K$ is the isotropic constant of $K$.
\end{theorem}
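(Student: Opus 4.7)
The plan is to apply Theorem~\ref{th:intro-3} to a Euclidean truncation of $K$. After reducing to the case of an origin-symmetric convex body in isotropic position with $|K|=1$ (using the symmetrization $\tilde K=K\cap(-K)$, whose volume, ${\rm as}$-functional and isotropic constant are comparable to those of $K$ up to absolute constants by Milman--Pajor and Rogers--Shephard, together with the scale-invariance of the inequality and the fact that ${\rm as}(K)/(|K|^{k/n}\max_E{\rm as}(K\cap E))$ is controlled across positions of the same linear class), define
$$K_M := K\cap M\,B_2^n,\qquad M := C\sqrt{n}\,L_K,$$
for an absolute constant $C$ to be fixed later.

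First I would control the outer volume ratio of $K_M$. Since $K_M\subseteq MB_2^n\in\mathcal{BP}_k^n$, and since Paouris' concentration inequality gives $|K\setminus MB_2^n|\leq e^{-c\sqrt{n}}|K|$ whence $|K_M|\geq|K|/2$, we get
$$d_{\rm ovr}(K_M,\mathcal{BP}_k^n)\leq\Bigl(\frac{|MB_2^n|}{|K_M|}\Bigr)^{\!1/n}\lesssim M\,\omega_n^{1/n}\simeq L_K,$$
using $\omega_n^{1/n}\simeq 1/\sqrt{n}$. Plugging this into Theorem~\ref{th:intro-3} applied to the origin-symmetric star body $K_M$, and exploiting $K_M\subseteq K$ to replace $\max_E{\rm as}(K_M\cap E)$ by $\max_E{\rm as}(K\cap E)$, yields
$${\rm as}(K_M)\lesssim (L_K)^k\,|K|^{k/n}\max_{E\in{\rm Gr}_{n-k}}{\rm as}(K\cap E).$$

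It remains to show ${\rm as}(K)\lesssim{\rm as}(K_M)$, i.e.\ that truncating to $MB_2^n$ only loses a constant factor in the average hyperplane section. Writing ${\rm as}(K)=(\omega_{n-1}/(n\omega_n))\int_{S^{n-1}}\rho_K^{n-1}(\theta)\,d\theta$ via the spherical Fubini formula, the difference ${\rm as}(K)-{\rm as}(K_M)$ is proportional to $\int_{S^{n-1}}(\rho_K^{n-1}-M^{n-1})_{+}\,d\theta$. A Markov-type estimate bounds this tail by $M^{-1}\int\rho_K^n\,d\theta=n|K|/M$, while Hensley's inequality for the one-dimensional marginals of the uniform measure on $K$ gives ${\rm as}(K)\simeq 1/L_K$, so $\int\rho_K^{n-1}\,d\theta\simeq\sqrt{n}/L_K$. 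Consequently the tail is at most an $O(1/C)$ fraction of the full integral, and choosing $C$ large enough yields ${\rm as}(K)\leq 2{\rm as}(K_M)$. Combining this with the previous display completes the proof. The main obstacle is the clean execution of the position-and-symmetrization reduction, and the verification that the Hensley bound on ${\rm as}(K)$ together with the Markov tail estimate indeed give ${\rm as}(K_M)\gtrsim{\rm as}(K)$ with an absolute multiplicative constant independent of $k$ and $n$.
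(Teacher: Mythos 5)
Your truncation idea is genuinely different from the paper's route and it does work for origin-symmetric bodies in isotropic position: with $|K|=1$ and $M=C\sqrt{n}L_K$ one gets $d_{\rm ovr}(K\cap MB_2^n,{\cal BP}_k^n)\lesssim L_K$ from Paouris' deviation inequality, the Markov bound $\int_{\{\rho_K>M\}}\rho_K^{n-1}\,d\theta\ls n|K|/M\simeq\sqrt{n}/(CL_K)$ is correct, and Hensley's bound ${\rm as}(K)\simeq 1/L_K$ lets you absorb the tail. This is an attractive way to deduce the $L_K$-bound from Theorem~\ref{th:intro-3} (in the spirit of E.~Milman's inequality \eqref{eq:link}). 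The paper instead proves Theorem~\ref{th:intro-6} via the restricted dual Loomis--Whitney inequality (Lemma~\ref{lem:local-1}, Theorem~\ref{th:BGL-1}) combined with the Dafnis--Paouris lower bound for $\tilde{R}_k(K)$; that route needs neither symmetry nor a special position, which is exactly where your argument has two genuine gaps.

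First, the symmetrization step is false as stated. For $\tilde K=K\cap(-K)$ the volume is only comparable up to $c^n$ (Milman--Pajor gives $|\tilde K|\gr 2^{-n}|K|$, and this is essentially attained), and the ${\rm as}$-functional is \emph{not} comparable: for the isotropic regular simplex $\Delta_n$ of volume $1$ one has ${\rm as}(\Delta_n)\simeq 1$, while $|\Delta_n\cap(-\Delta_n)|\simeq (2/e)^n$ and hence every central hyperplane section of $\Delta_n\cap(-\Delta_n)$ has volume at most $cn(2/e)^n$ (compare the cone over a section from the inradius), so ${\rm as}(\Delta_n\cap(-\Delta_n))$ is exponentially smaller than ${\rm as}(\Delta_n)$. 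Since your final step needs ${\rm as}(K)\ls C^k{\rm as}(\tilde K)$ (with $k$ possibly equal to $1$), the reduction collapses. Second, the reduction to the isotropic position is not justified: ${\rm as}(K)$ and $\max_{E}{\rm as}(K\cap E)$ are not linear invariants, and the assertion that the ratio ${\rm as}(K)/\bigl(|K|^{k/n}\max_E{\rm as}(K\cap E)\bigr)$ is ``controlled across positions of the same linear class'' is essentially equivalent in strength to the theorem itself (given the isotropic case), so invoking it is circular. Nor can the position issue be sidestepped by truncating with the inverse image of a ball (an ellipsoid, still in ${\cal BP}_k^n$): the tail estimate then involves $\rho_{\mathcal E}^{-1}$ on the thin directions of the ellipsoid and the identity ${\rm as}(K)\simeq 1/L_K$ is lost. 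As it stands your proof covers origin-symmetric isotropic bodies only; to cover arbitrary centered bodies in arbitrary position you need an input like Theorem~\ref{th:BGL-1}, which is position-free and does not require symmetry.
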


It is known that for many classes of convex bodies the isotropic constant $L_K$ is bounded by an absolute
constant (see \cite[Chapter 4]{BGVV-book}). Theorem \ref{th:intro-6} provides an affirmative answer to Question \ref{question-intro}
for all these classes.

On the other hand, it is interesting to note that \eqref{eq:intro-12} is essentially the best bound we can hope for.
We show in Proposition \ref{prop:isotropic} that if $K$ is an isotropic convex body in ${\mathbb R}^n$ then
\begin{equation}\label{eq:intro-13}{\rm as}(K)\simeq L_K\,\max_{\xi\in S^{n-1}}{\rm as}(K\cap\xi^{\perp })\,|K|^{\frac{1}{n}}.\end{equation}
This shows that the estimate of Theorem \ref{th:intro-6} is asymptotically sharp: if $\gamma >0$ is a constant such that \eqref{eq:intro-4}
holds for $k=1$ and all $K$ then we must have $\gamma \gr cL_K$. Combining this fact with Theorem \ref{th:intro-6} we actually conclude
that
\begin{equation}\label{eq:intro-14}\gamma_{n,k}\lesssim \gamma_{n,1}\simeq L_n\end{equation}
for all $1\ls k\ls n-1$ (see Proposition \ref{prop:worse-k}).

One of the tools that are used in the proof of Theorem \ref{th:intro-6} is a variant of Meyer's dual Loomis-Whitney inequality \cite{Meyer-1988}
that was recently obtained in \cite{Brazitikos-Giannopoulos-Liakopoulos-2016}; see \eqref{eq:local-2}. The second tool
is a lower bound for the dual affine quermassintegrals
\begin{equation}\label{eq:intro-15}\tilde{\Phi}_k(K):=\frac{\omega_n}{\omega_{n-k}}\left (\int_{{\rm Gr}_{n-k}}|K\cap E|^n\,d\nu_{n-k}(E)\right )^{\frac{1}{n}}
\end{equation}of a convex body $K$ in ${\mathbb R}^n$ in terms of the isotropic constant of $K$ (see \cite{Dafnis-Paouris-2012}). In fact, one can check that the problem to obtain
asymptotically sharp lower bounds for $\tilde{\Phi }_k(K)$ is equivalent to the question whether $\gamma_{n,1}\simeq L_n\simeq 1$ (see Remark \ref{rem:dual-affine}).
When the codimension $k$ is proportional to $n$ the available lower bounds are independent from the isotropic constant of $K$ (see \cite{Dafnis-Paouris-2012} and
\cite[Section~6.4]{BGVV-book}). Thus, we get a variant of Theorem \ref{th:intro-4}.

\begin{theorem}\label{th:intro-7} Let $1\ls k \ls n-1$ and let $K$ be a centered convex body in $\R^n.$ Then,
\begin{equation}\label{eq:intro-16} {\rm as}(K)\ls \big (c_3h(n/k)\big )^k \,|K|^{\frac{k}{n}}\,\max_{E\in Gr_{n-k}} {\rm as}(K\cap E),\end{equation}
where $c_2>0$ is an absolute constant and $h(t)=\sqrt{t}\cdot (\log (et))^{\frac{3}{2}}$, $t\gr 1$.
\end{theorem}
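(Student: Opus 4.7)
The plan is to revisit the proof of Theorem \ref{th:intro-6}, which already applies to arbitrary centered convex bodies, and to replace the isotropic-constant factor $L_K$ by the quantity $h(n/k)$. The key observation is that in the derivation of Theorem \ref{th:intro-6}, the constant $L_K$ enters in exactly one place: the Dafnis-Paouris lower bound
$$\tilde{\Phi}_k(K)\gtrsim \frac{|K|^{1/n}}{L_K}$$
on the dual affine quermassintegrals (this is essentially the content of the forthcoming Remark \ref{rem:dual-affine}). Consequently, if one can invoke a lower estimate of the form $\tilde{\Phi}_k(K)\gtrsim |K|^{1/n}/h(n/k)$ in its place, the rest of the argument carries through verbatim.

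The first step will be to isolate, from the proof of Theorem \ref{th:intro-6}, the intermediate inequality
$${\rm as}(K)\ls c^k\,\frac{|K|}{\tilde{\Phi}_k(K)^k}\,\max_{E\in {\rm Gr}_{n-k}}{\rm as}(K\cap E),$$
which is obtained by combining the variant of Meyer's dual Loomis-Whitney inequality from \cite{Brazitikos-Giannopoulos-Liakopoulos-2016} with an averaging over the Grassmannian that produces $\tilde{\Phi}_k(K)^k$ on the right-hand side. The second step will be to plug in the general, $L_K$-free bound for $\tilde{\Phi}_k(K)$ that is available from \cite{Dafnis-Paouris-2012} (see also \cite[Section~6.4]{BGVV-book}): for every centered convex body $K$ in $\R^n$ and every $1\ls k\ls n-1$,
$$\tilde{\Phi}_k(K)\gtrsim \frac{|K|^{1/n}}{h(n/k)},\qquad h(t)=\sqrt{t}\,(\log(et))^{3/2}.$$
Substituting and rearranging then produces the claimed inequality, with an absolute constant $c_3$ in the role of the constant $c_2$ of Theorem \ref{th:intro-6}.

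The main obstacle will be to confirm the $h(n/k)$-type lower bound for $\tilde{\Phi}_k(K)$ across the full range of codimensions $1\ls k\ls n-1$: the arguments of \cite{Dafnis-Paouris-2012} give a bound that is independent of $L_K$ most cleanly when $k$ is proportional to $n$, and the logarithmic corrections encoded in $h$ arise exactly when one pushes the estimate towards smaller $k$. A careful bookkeeping of the polynomial and logarithmic factors, together with the verification that the resulting constant matches the one appearing in Theorem \ref{th:intro-7}, will be the most delicate part of the write-up; the rest is formally the same computation as in Theorem \ref{th:intro-6}, with $L_K$ consistently replaced by $h(n/k)$.
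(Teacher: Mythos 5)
Your proposal is correct and follows essentially the paper's own proof: the paper establishes this statement as Theorem \ref{th:gamma-DP} by repeating the proof of Theorem \ref{th:intro-6} verbatim, with the $L_K$-free Dafnis--Paouris estimate $\tilde{R}_k(K)\gr \big(c_5/h(n/k)\big)^{kn}$ used in place of \eqref{eq:main-6}. The only caveat is a normalization slip in your intermediate display --- with the paper's definition \eqref{eq:intro-15} the quantity appearing in the denominator is $\big(\int_{{\rm Gr}_{n-k}}|K\cap E|^n\,d\nu_{n-k}(E)\big)^{1/n}$ rather than $\tilde{\Phi}_k(K)^k$ --- but this is cosmetic and does not affect the argument.
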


The methods that are used for the proof of Theorem \ref{th:intro-6} and Theorem \ref{th:intro-3} are independent. Note that the
first method allows us to work with (not necessarily symmetric) centered convex bodies while the second method allows us to work
with origin-symmetric (not necessarily convex) star bodies and to consider even continuous densities in place of volume. 
Therefore, the two results complement each other. A link between the two bounds is given by the inequality
\begin{equation}\label{eq:link}L_K\ls cL_k\cdot d_{\rm ovr}(K,{\cal{BP}}_k^n)\end{equation}
which is due to E.~Milman (see \cite[Corollary~5.4]{Milman-2006}). However, since we only know that $L_k=O(\sqrt[4]{k})$,
the estimates of Theorem \ref{th:intro-6} and Theorem \ref{th:intro-3} are incomparable for $k\gg 1$.

\smallskip 

In Section \ref{positions} we discuss the mean value of the average section functional ${\rm as}(K\cap E)$ over all $E\in {\rm Gr}_{n-k}$, $1\ls k\ls n-1$. We obtain
the next general upper and lower bounds.

\begin{theorem}\label{th:intro-8}Let $K$ be a centered convex body in ${\mathbb R}^n$ and define $p(K):=R(K)/|K|^{\frac{1}{n}}$,
where $R(K)$ is the circumradius of $K$. Then, for every $1\ls k\ls n-1$ we have that
\begin{equation}\label{eq:intro-17}\left (\frac{c_4\sqrt{n}}{p(K)}\right )^k\,{\rm as}(K)\ls |K|^{\frac{k}{n}}\,\int_{{\rm Gr}_{n-k}}{\rm as}(K\cap E)\,d\nu_{n-k}(E)\ls \left (\frac{c_5p(K)}{\sqrt{n}}\right )^{\frac{k}{n-1}}\,{\rm as}(K),\end{equation}
where $c_4, c_5>0$ are absolute constants.
\end{theorem}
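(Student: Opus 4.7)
I would first identify the middle quantity with a single section functional. Parametrising pairs $(E,\xi)$ with $\xi\in S^{n-1}\cap E$ in two ways and using the uniqueness of $O(n)$-invariant probability measures, the map $(E,\xi)\mapsto E\cap\xi^{\perp}$ pushes forward $d\nu_{n-k}(E)\,d\sigma_E(\xi)$ to Haar measure on $\mathrm{Gr}_{n-k-1}$; hence
\[
\int_{\mathrm{Gr}_{n-k}}\mathrm{as}(K\cap E)\,d\nu_{n-k}(E)=\mathrm{as}_{k+1}(K).
\]
Polar integration then yields the clean representations $\mathrm{as}_{r}(K)=\omega_{n-r}\int_{S^{n-1}}\rho_K^{n-r}\,d\sigma$ and $|K|=\omega_n\int_{S^{n-1}}\rho_K^{n}\,d\sigma$, so the theorem becomes a pair of comparisons between moments of $\rho_K$ on the sphere.

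For the lower inequality I would use the trivial pointwise bound $\rho_K(\theta)\leq R(K)$: multiplying by $\rho_K^{n-k-1}$ and integrating gives $\mathrm{as}(K)\leq(\omega_{n-1}/\omega_{n-k-1})\,R(K)^{k}\,\mathrm{as}_{k+1}(K)$. Substituting $R(K)=p(K)|K|^{1/n}$ and invoking the Stirling estimate $\omega_{n-k-1}/\omega_{n-1}\simeq(n/(2\pi))^{k/2}$, which is uniform in $1\leq k\leq n-1$, then delivers the claim with an absolute constant $c_4$.

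For the upper inequality I would combine two complementary ingredients. The monotonicity of $L^p$-norms on the sphere, $\|\rho_K\|_{n-k-1}\leq\|\rho_K\|_{n-1}$, yields
\[
\mathrm{as}_{k+1}(K)\leq\omega_{n-k-1}\bigl(\mathrm{as}(K)/\omega_{n-1}\bigr)^{(n-k-1)/(n-1)}.
\]
Separately, the pointwise bound $\rho_K^{n}\leq R(K)\rho_K^{n-1}$ integrates to $|K|/\omega_n\leq R(K)\,\mathrm{as}(K)/\omega_{n-1}$, equivalently
\[
|K|^{(n-1)/n}\leq(\omega_n/\omega_{n-1})\,p(K)\,\mathrm{as}(K);
\]
raising this inequality to the power $k/(n-1)$ gives $|K|^{k/n}\leq\bigl((\omega_n/\omega_{n-1})\,p(K)\bigr)^{k/(n-1)}\mathrm{as}(K)^{k/(n-1)}$. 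Multiplying the two displayed bounds, the exponents of $\mathrm{as}(K)$ sum to $1$, and the asymptotic $\omega_n/\omega_{n-1}\simeq\sqrt{2\pi/n}$ produces precisely the factor $(p(K)/\sqrt{n})^{k/(n-1)}$ on the right.

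The main technical task, and the step I would expect to be the most delicate, is the gamma-function bookkeeping: one has to check, uniformly in $1\leq k\leq n-1$, that the collected dimensional prefactor
\[
\omega_{n-k-1}\,\omega_{n-1}^{-(n-k-1)/(n-1)}\bigl(\sqrt{n}\,\omega_n/\omega_{n-1}\bigr)^{k/(n-1)}
\]
is bounded by $c_5^{k/(n-1)}$ for an absolute constant $c_5$. This is a careful but essentially routine application of Stirling's formula to the ratios of $\Gamma$-values that define $\omega_m$, and it is the only place where the precise form of the exponent $k/(n-1)$ in the statement gets tested.
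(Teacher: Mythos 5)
Your route is the same as the paper's: identify the Grassmannian average with $\mathrm{as}_{k+1}(K)=\omega_{n-k-1}\int_{S^{n-1}}\rho_K^{n-k-1}\,d\sigma$ and then compare moments of $\rho_K$ on the sphere. Your lower bound via the pointwise estimate $\rho_K\ls R(K)$ is correct, and a little more direct than the paper's (which gets the same conclusion from H\"older's inequality together with $\mathrm{as}(K)\gr\frac{\omega_{n-1}}{\omega_n R(K)}|K|$). There the Stirling bookkeeping really is harmless: the statement only asks for a constant of the form $c_4^k$, which absorbs any $c^k$ loss (note, though, that $\omega_{n-k-1}/\omega_{n-1}\simeq (n/2\pi)^{k/2}$ holds only up to such $c^k$ factors when $k$ is proportional to $n$; you only need the one-sided bound $\gr (c\sqrt{n})^k$, which is fine).

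The step you defer as ``routine'' is where the argument breaks, and it cannot be repaired for the statement as written. The prefactor you isolate is not bounded by $c_5^{k/(n-1)}$: already for $k=n-2$ one has
\[
\omega_{n-k-1}\,\omega_{n-1}^{-\frac{n-k-1}{n-1}}=2\,\omega_{n-1}^{-\frac{1}{n-1}}\simeq\sqrt{n},
\]
while $\bigl(\sqrt{n}\,\omega_n/\omega_{n-1}\bigr)^{k/(n-1)}\simeq 1$ and $c_5^{(n-2)/(n-1)}\ls\max(1,c_5)$ is bounded. Indeed the right-hand inequality of the theorem fails for $K=B_2^n$ with $k=n-2$: the middle quantity equals $2\,\omega_n^{(n-2)/n}$, the right-hand side is $\simeq\bigl(c_5/\sqrt{2\pi e}\bigr)^{(n-2)/(n-1)}\omega_{n-1}$, and $2\,\omega_n^{(n-2)/n}/\omega_{n-1}\simeq\sqrt{n}$. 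What your computation actually proves --- and this is also all that the paper's own proof establishes, since it only shows $\varrho_{n,k}:=\omega_{n-k-1}\omega_{n-1}^{-(n-k-1)/(n-1)}\ls c^k$ before writing the final constant with exponent $k/(n-1)$ --- is the correct weaker form
\[
|K|^{\frac{k}{n}}\,\int_{{\rm Gr}_{n-k}}{\rm as}(K\cap E)\,d\nu_{n-k}(E)\ls c^k\left(\frac{p(K)}{\sqrt{n}}\right)^{\frac{k}{n-1}}{\rm as}(K),
\]
and that is the statement you should prove.
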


Since $R(K)$ is polynomial in $n$ for all the classical positions of a convex body $K$ (isotropic position, minimal surface position, minimal
mean width position, John's and L\"{o}wner's position) the right hand side inequality of \eqref{eq:intro-17} implies the following.

\begin{theorem}\label{th:intro-9}Let $K$ be a centered convex body in ${\mathbb R}^n$. If $K$ is in one of the classical positions
then
\begin{equation}\label{eq:intro-18}|K|^{\frac{k}{n}}\,\int_{{\rm Gr}_{n-k}}{\rm as}(K\cap E)\,d\nu_{n-k}(E)\ls c_6^k\,{\rm as}(K)\end{equation}
for every $1\ls k\ls n-1$, where $c_6>0$ is an absolute constant.
\end{theorem}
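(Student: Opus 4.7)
My plan is to deduce Theorem~\ref{th:intro-9} directly from the right-hand inequality in Theorem~\ref{th:intro-8}, which already packages all of the geometric content. That inequality gives
\[
|K|^{k/n}\int_{{\rm Gr}_{n-k}}{\rm as}(K\cap E)\,d\nu_{n-k}(E)\ls \left(\frac{c_5 p(K)}{\sqrt{n}}\right)^{k/(n-1)}{\rm as}(K),
\]
so the task reduces to a purely quantitative estimate on $p(K)=R(K)/|K|^{1/n}$. I claim it suffices to establish, in each of the five classical positions, a polynomial bound $p(K)\ls C_0 n^{\alpha}$ with absolute constants $C_0,\alpha>0$. Indeed, granted such a bound,
\[
\left(\frac{c_5 p(K)}{\sqrt{n}}\right)^{k/(n-1)}\ls \bigl(c_5 C_0\, n^{\alpha-1/2}\bigr)^{k/(n-1)}=\Bigl[(c_5 C_0)^{1/(n-1)}\, n^{(\alpha-1/2)/(n-1)}\Bigr]^k\ls c_6^k,
\]
where $c_6:=\sup_{n\gr 2}(c_5 C_0)^{1/(n-1)}\,n^{(\alpha-1/2)/(n-1)}$ is finite, since $(\log n)/(n-1)$ is bounded for $n\gr 2$.

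The remaining step is to verify $p(K)\ls C_0 n^{\alpha}$ in each classical position, and all of these bounds are standard. In isotropic position the centering and second-moment conditions force $K\subseteq (n+1)L_K B_2^n$, so $R(K)\ls (n+1)L_K$ while $|K|^{1/n}=1$; together with Klartag's bound $L_n=O(n^{1/4})$ this gives $p(K)\ls cn^{5/4}$. In John's position (normalized so that $B_2^n$ is the John ellipsoid of $K$) one has $R(K)\ls n$ and $|K|^{1/n}\gr \omega_n^{1/n}\simeq 1/\sqrt{n}$, giving $p(K)\ls n^{3/2}$; L\"{o}wner's position is handled dually. For the minimal mean width position I would combine the known estimate $w(K)\lesssim \sqrt{n}\log n\cdot |K|^{1/n}$ in that position with the trivial $R(K)\ls\tfrac{n}{2}w(K)$; for the minimal surface area position the corresponding polynomial diameter bound is also known.

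The main potential obstacle is only bookkeeping: one must pick $\alpha$ to be the largest exponent appearing across the five cases (even the crude choice $\alpha=2$ suffices) and choose $c_6$ accordingly. None of the steps is technically delicate; the only one that relies on a comparatively recent deep result is the isotropic case, through Klartag's bound on $L_n$.
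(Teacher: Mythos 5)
Your proposal is correct and follows essentially the same route as the paper: Theorem \ref{th:intro-9} is deduced from the right-hand inequality of Theorem \ref{th:intro-8} together with the polynomial-in-$n$ bounds on $R(K)/|K|^{1/n}$ in each classical position (collected in the paper's Remark \ref{rem:classical}), and the exponent $k/(n-1)$ absorbs the polynomial factor exactly as in your bookkeeping step.
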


Closing this introductory section we would like to note that the results of this article are dual to the ones in \cite{Giannopoulos-Koldobsky-Valettas-2016}.
In that work, the main question was to compare the surface area $S(K)$ of a convex body $K$ in ${\mathbb R}^n$
to the minimal, average or maximal surface area of its hyperplane or lower dimensional projections. One of the
main results in \cite{Giannopoulos-Koldobsky-Valettas-2016} states that there exists an absolute constant $c_1>0$ such that, for every convex body $K$ in ${\mathbb R}^n$,
\begin{equation}\label{eq:intro-19}|K|^{\frac{1}{n}}\,\min_{\xi\in S^{n-1}}S(P_{\xi^{\perp }}(K))\ls\frac{c_7\partial_K}{\sqrt{n}}\,S(K),\end{equation}
where $c_7>0$ is an absolute constant and
\begin{equation}\label{eq:intro-20}\partial_K:=\min\Big\{ S(T(K))/|T(K)|^{\frac{n-1}{n}}:T\in GL(n)\Big\}\end{equation}
is the {\sl minimal surface area parameter} of $K$. Another result from \cite{Giannopoulos-Koldobsky-Valettas-2016}
asserts that if $K$ is in some of the classical positions mentioned above, then
\begin{equation}\label{eq:intro-21}|K|^{\frac{1}{n}}\,\int_{S^{n-1}}S(P_{\xi^{\perp }}(K))\,d\sigma (\xi )\gr c_8S(K),\end{equation}
where $c_9>0$ is an absolute constant. The analogy with Theorem \ref{th:intro-6} and Theorem \ref{th:intro-9} is clear; the role of the
average section functional ${\rm as}(K)$ is played by the surface area $S(K)$, and the role of the isotropic constant is played
by the minimal surface area parameter.

%%%%%%%%%%%%%%%%%%%%%%%%%%%%%%%%%%%%%%%%%%%%%%%%%%%%%%%%%%%%%%%%%%%%%%%%%%%%%%%%%%%%%%%%%%%%%%%%%%%%%%%%%%%%%%%%%%%%%%%%%%%%%%%%%%%%%%%%%%%%%%%%%%%%
\section{Notation and background}\label{notation}
%%%%%%%%%%%%%%%%%%%%%%%%%%%%%%%%%%%%%%%%%%%%%%%%%%%%%%%%%%%%%%%%%%%%%%%%%%%%%%%%%%%%%%%%%%%%%%%%%%%%%%%%%%%%%%%%%%%%%%%%%%%%%%%%%%%%%%%%%%%%%%%%%%%%

We work in ${\mathbb R}^n$, which is equipped with a Euclidean structure $\langle\cdot ,\cdot\rangle $. We denote by $\|\cdot \|_2$
the corresponding Euclidean norm, and write $B_2^n$ for the Euclidean unit ball and $S^{n-1}$ for the unit sphere.
Volume is denoted by $|\cdot |$. We write $\omega_n$ for the volume of $B_2^n$ and $\sigma $ for the rotationally invariant probability
measure on $S^{n-1}$. We use the notation $d\theta$ for the non-normalized measure on the sphere with density $1$.

If $\xi\in S^{n-1}$, then $\xi^{\perp }=\{x\in {\mathbb R}^n:\langle x,\xi\rangle =0\}$.
The Grassmann manifold ${\rm Gr}_m$ of $m$-dimensional subspaces of ${\mathbb R}^n$ is equipped with the Haar probability
measure $\nu_{m}$. For every $1\ls m\ls n-1$ and $E\in {\rm Gr}_m$ we write $P_E$ for the orthogonal projection from $\mathbb R^{n}$ onto $E$,
and we set $B_E=B_2^n\cap F$ and $S_E=S^{n-1}\cap E$. The letters $c,c^{\prime }, c_1, c_2$ etc. denote absolute positive constants which may change from line to line.

We refer to the books \cite{Gardner-book} and \cite{Schneider-book} for basic facts from the Brunn-Minkowski theory and to the book
\cite{AGA-book} for basic facts from asymptotic convex geometry.

\medskip

%%%%%%%%%%%%%%%%%%%%%%%%%%%%%%%%%%%%%%%%%%%%%%%%%%%%%%%%%%%%%%%%%%%%%%%%%%%%%%%%%%%%%%%%%%%%%%%%%%%%%%%%%%%%%%%%%%%%%%%%%%%%%%%%%%%%%%%%%%
\noindent {\bf 2.1. Star bodies and convex bodies}. A convex body in ${\mathbb R}^n$ is a compact convex
subset $K$ of ${\mathbb R}^n$ with non-empty interior. We say that $K$ is
symmetric if $x\in K$ implies that $-x\in K$, and that $K$ is centered if its barycenter $\frac{1}{|K|}\int_Kx\,dx $ is at the origin.
A compact set $K$ in ${\mathbb R}^n$ will be called star-shaped at $0$ if it contains the origin in its interior and every line through
$0$ meets $K$ in a line segment. For such a set, the radial function $\rho_K $ is defined on $S^{n-1}$ by
\begin{equation}\label{eq:notation-1}\rho_K(\theta )=\max \{\lambda >0:\lambda\theta\in K\},\qquad \theta\in S^{n-1}.\end{equation}
If $\rho_K $ is continuous, then we say that $K$ is a star body. Then, the volume of $K$ in polar coordinates
is given by
\begin{equation}\label{eq:notation-2}|K|=\omega_n\int_{S^{n-1}}\rho_K^n(\theta )\,d\sigma (\theta ).\end{equation}
The radial sum $K\tilde{+} D$ of two star bodies $K$ and $D$ is defined by
\begin{equation}\label{eq:notation-3}\rho_{K\tilde{+} D}= \rho_K + \rho_D.\end{equation}
We equip the class ${\cal S}_n$ of star bodies with the radial metric
\begin{equation}\label{eq:notation-4}d_r(K,D):=\sup_{\xi\in S^{n-1}} |\rho_K(\xi)-\rho_D(\xi)|.\end{equation}
The support function of a convex body $K$ is defined by $h_K(y)=\max \{\langle x,y\rangle :x\in K\}$, and the mean width of $K$ is
\begin{equation}\label{eq:notation-5}w(K)=\int_{S^{n-1}}h_K(\theta )\,d\sigma (\theta ). \end{equation}
The circumradius of $K$ is the smallest $R>0$ for which $K\subseteq RB_2^n$.
If $0\in {\rm int}(K)$ then we write $r(K)$ for the inradius of $K$ (the largest $r>0$
for which $rB_2^n\subseteq K$) and we define the polar body $K^{\circ }$ of $K$ by
\begin{equation}\label{eq:notation-6}K^{\circ }:=\{ y\in {\mathbb R}^n: \langle x,y\rangle \ls 1 \;\hbox{for all}\; x\in K\}. \end{equation}
The section of a star body $K$ with $\xi^{\perp }$ is denoted by $K\cap\xi^{\perp }$,
and we write $P_{\xi^{\perp } }(K)$ for the orthogonal projection of $K$ onto $\xi^{\perp }$.

The volume radius of $K$ is the quantity ${\rm vrad}(K)=\left (|K|/|B_2^n|\right )^{1/n}$.
We also define $\|\theta \|_K=\min\{ t>0:\theta \in tK\}$ and
\begin{equation}\label{eq:notation-7}M(K)=\int_{S^{n-1}}\rho_K^{-1}(\theta )\,d\sigma (\theta )=\int_{S^{n-1}}\|\theta\|_K\,d\sigma (\theta ).\end{equation}

\medskip

\noindent {\bf 2.2. Dual mixed volumes}. Lutwak introduced dual mixed volumes in \cite{Lutwak-1975}; he
first considered convex bodies, but then extended his definition to the class ${\mathcal S}_n$ of star
bodies. Given $K_1,\ldots,K_n\in {\mathcal S}_n$, their dual mixed volume is the integral
\begin{equation}\label{eq:notation-8}\tilde{V}(K_1,\ldots,K_n)=\omega_n \int_{S^{n-1}}
\rho_{K_1}(\theta )\cdots \rho_{K_n}(\theta )d\sigma (\theta ).\end{equation}
The observation is that such integrals have properties analogous to those of mixed volumes if one replaces Minkowski addition by
radial addition. The function $\tilde{V}$ is clearly non-negative, symmetric and monotone with respect to its arguments,
positive linear with respect to $\tilde{+}$ in each of its arguments, and has volume as its diagonal. A simple calculation shows that if
$K_1,\ldots,K_m\in {\mathcal S}_n$ and $\lambda_1,\ldots,\lambda_m>0$, then
\begin{equation}\label{eq:notation-9}|\lambda_1 K_1 \tilde{+}\cdots\tilde{+}\lambda_mK_m|=
\sum_{i_1,\ldots,i_n=1}^m \tilde{V}(K_{i_1},\ldots,K_{i_n})\lambda_{i_1}\ldots\lambda_{i_n}.\end{equation}
In particular, if $K,D\in {\mathcal S}_n$ and $t>0$ then
\begin{equation}\label{eq:notation-10}|K\tilde{+}tD|=\sum_{j=0}^n {n\choose j}\tilde{V}_j(K,D)\;t^j,\end{equation}
where $\tilde{V}_j(K,D):=\omega_n\int_{S^{n-1}}\rho_K^{n-j}(\theta )\rho_D^{j}(\theta )d\sigma (\theta )$, is the
$j$-th dual mixed volume of $K$ and $D$.

An inequality which further illustrates the analogy with the mixed volumes is the dual Minkowski's
inequality: for every $K,D\in {\mathcal S}_n$ an application of H\"older's inequality gives
\begin{equation}\label{eq:notation-11}\tilde{V}_1(K,D) \ls \left( \omega_n\int_{S^{n-1}}\rho_K^n(\theta )
d\sigma (\theta )\right)^{\frac{n-1}{n}}\left(\omega_n\int_{S^{n-1}}\rho_D^n(\theta )d\sigma (\theta )\right)^{\frac{1}{n}}
\ls |K|^{\frac{n-1}{n}}|D|^{\frac{1}{n}}.\end{equation}

\medskip

\noindent {\bf 2.3. Intersection bodies}. The class of intersection bodies was introduced by Lutwak \cite{Lutwak-1988}.
The intersection body of a star body $K$ in ${\mathbb R}^n$
with $\rho_K\in C(S^{n-1})$ is the star body $IK$ with radial function
\begin{equation}\label{eq:notation-12}\rho_{IK}(\xi )=|K\cap \xi^{\perp }|=\omega_{n-1}\int_{S(\xi^{\perp })}\rho_K^{n-1}(\theta )d\sigma_{\xi }(\theta ),\end{equation}
where $S(\xi^{\perp })=S^{n-1}\cap\xi^{\perp }$ is the Euclidean unit sphere of $\xi^{\perp }$ and
$\sigma_{\xi }$ denotes the rotationally invariant probability measure on $S(\xi^{\perp })$.
If $K$ is a centered convex body then $IK$ is a symmetric convex body. It is known that
\begin{equation}\label{eq:notation-13}I(TK)=|\det T|\,(T^{-1})^{\ast }(IK)\end{equation}
for every $T\in GL(n)$. In particular, if $T\in SL(n)$ we see that $|I(TK)|=|IK|$.
The class of intersection bodies ${\cal{I}}_n$ is defined as the closure in the radial metric
of intersection bodies of star bodies.

Zhang introduced more general classes of bodies in \cite{Zhang-1996}.
For $1\ls k \ls n-1,$  the {\it $(n-k)$-dimensional spherical Radon transform}
$R_{n-k}:C(S^{n-1})\to C({\rm Gr}_{n-k})$
is a linear operator defined by
\begin{equation}\label{eq:notation-14}R_{n-k}g (E)=\int_{S^{n-1}\cap E} g(\theta)\ d\theta,\qquad E\in {\rm Gr}_{n-k}\end{equation}
for every function $g\in C(S^{n-1}).$ We say that
an origin-symmetric star body $D$ in $\R^n$ is a {\it generalized $k$-intersection body},
and write $D\in {\cal{BP}}_k^n$, if there exists a finite non-negative Borel measure $\mu_D$
on ${\rm Gr}_{n-k}$ so that for every $g\in C(S^{n-1})$
\begin{equation}\label{eq:notation-16}
\int_{S^{n-1}} \rho_D^{k}(\theta) g(\theta)\ d\theta=\int_{{\rm Gr}_{n-k}} R_{n-k}g(H)\ d\mu_D(H).
\end{equation}
For a star body $K$ in $\R^n$ and $1\ls k \ls n-1$, we denote by
$$d_{\rm {ovr}}(K,{\cal{BP}}_k^n) = \inf \left\{ \left( \frac {|D|}{|K|}\right)^{1/n}:\ K\subset D,\ D\in {\cal{BP}}_k^n \right\}$$
the outer volume ratio distance from $K$ to the class ${\cal{BP}}_k^n$. The reader will find more
information on the Radon transform and intersection bodies in the book \cite{Koldobsky-book}.

%%%%%%%%%%%%%%%%%%%%%%%%%%%%%%%%%%%%%%%%%%%%%%%%%%%%%%%%%%%%%%%%%%%%%%%%%%%%%%%%%%%%%%%%%%%%%%%%%%%%%%%%%%%%%%%%%%%%%%%%%%%%%%%%%%%%%%%%%%%%%%%%%%%%
\section{Bounds in terms of the outer volume ratio distance to the class of generalized $k$-intersection bodies} \label{ovr}
%%%%%%%%%%%%%%%%%%%%%%%%%%%%%%%%%%%%%%%%%%%%%%%%%%%%%%%%%%%%%%%%%%%%%%%%%%%%%%%%%%%%%%%%%%%%%%%%%%%%%%%%%%%%%%%%%%%%%%%%%%%%%%%%%%%%%%%%%%%%%%%%%%%%

The main result of this section is Theorem \ref{th:intro-2} which is valid for the larger class of origin-symmetric star bodies
in ${\mathbb R}^n$ and for any even continuous density on $\R^n$. By an appropriate choice of the density $f$ we obtain Theorem \ref{th:intro-3} and
its generalization in Theorem \ref{th:intro-5}.

\medskip 

\noindent{\bf Proof of Theorem \ref{th:intro-2}.} Let $\e>0.$ For every $E\in {\rm Gr}_{n-k},$ we have
\begin{equation}\label{eq:ovr-1}\int_{(K\tilde{+} \e B_2^n)\cap E} f(x) dx -  \int_{K\cap E} f(x) dx\ls  \max_{F\in {\rm Gr}_{n-k}} \left (\int_{(K\tilde{+} \e B_2^n)\cap F} f(x) dx -  \int_{K\cap F} f(x) dx\right ).\end{equation}
Note that $\rho_{K\tilde{+}\e B_2^n}=\rho_K+\e$. Expressing the integrals in polar coordinates we get
\begin{equation}\label{eq:ovr-2}R_{n-k}\left(\int_{\rho_K(\cdot)}^{\rho_{K}(\cdot)+\e}
r^{n-k-1}f(r\cdot) dr \right)(E) \ls \max_{F\in {\rm Gr}_{n-k}} \left(\int_{S^{n-1}\cap F}
\int_{\rho_K(\theta)}^{\rho_{K}(\theta)+\e} r^{n-k-1}f(r\theta) dr d\theta \right).
\end{equation}
Let $D\in {\cal{BP}}_k^n$ such that $K\subset D.$ Integrating the latter inequality by $E$ over ${\rm Gr}_{n-k}$ with the measure $\mu_D$
corresponding to $D$ by \eqref{eq:notation-14}, we get
\begin{align}\label{eq:ovr-3}&\int_{S^{n-1}}\rho_D^k(\theta)\int_{\rho_K(\theta)}^{\rho_{K}(\theta)+\e}
r^{n-k-1}f(r\theta) dr d\theta\\
\nonumber &\hspace*{1.5cm}  \ls \mu_D({\rm Gr}_{n-k})\cdot 
\max_{F\in {\rm Gr}_{n-k}} \left(\int_{S^{n-1}\cap F} \int_{\rho_K(\theta)}^{\rho_{K}(\theta)+\e} r^{n-k-1}f(r\theta) dr d\theta \right).
\end{align}
We divide both sides by $\e$ and send $\e$ to zero. Note that we can interchange the limit with the maximum,
because the convergence is uniform with respect to $F$. Thus, we get
\begin{equation}\label{eq:ovr-4}\int_{S^{n-1}}\rho_D^k(\theta)\rho_K^{n-k-1}(\theta) f(\rho_K(\theta)\theta)d\theta\ls \mu_D({\rm Gr}_{n-k})\cdot \max_{F\in {\rm Gr}_{n-k}} \left(\int_{S^{n-1}\cap F} \rho_K^{n-k-1}(\theta)
f(\rho_K(\theta)\theta) d\theta\right).\end{equation}
The integral in the left hand side can be estimated from below by $\int_{S^{n-1}} \rho_K^{n-1}(\theta)
f(\rho_K(\theta)\theta)\ d\theta,$
because $K\subset D.$

To estimate $\mu_D({\rm Gr}_{n-k})$ from above, we combine the fact that $1= R_{n-k}{\bf 1}(E)/|S^{n-k-1}|$ for every $E\in {\rm Gr}_{n-k}$ with
Definition \eqref{eq:notation-14} and H\"older's inequality to write
\begin{align}\label{eq:ovr-5} \mu_D({\rm Gr}_{n-k}) &= \frac 1{\left|S^{n-k-1}\right|} \int_{{\rm Gr}_{n-k}} R_{n-k}{\bf 1}(E) d\mu_D(E)\\
\nonumber &=\frac 1{\left| S^{n-k-1} \right| } \int_{S^{n-1}} \|\theta\|_D^{-k}\ d\theta \\
\nonumber &\ls  \frac 1{\left|S^{n-k-1}\right|} \left|S^{n-1}\right|^{\frac{n-k}n} \left(\int_{S^{n-1}} \|\theta\|_D^{-n}\ d\theta\right)^{\frac kn}\\
\nonumber &=  \frac{1}{\left|S^{n-k-1}\right|} \left|S^{n-1}\right|^{\frac{n-k}n} n^{\frac{k}{n}}|D|^{\frac{k}{n}}.
\end{align}
These estimates show that
\begin{align}\label{eq:ovr-6}&\int_{S^{n-1}} \rho_K^{n-1}(\theta)f(\rho_K(\theta)\theta) d\theta \\
\nonumber &\hspace*{1.5cm} \ls \frac{1}{\left|S^{n-k-1}\right|} \left|S^{n-1}\right|^{\frac{n-k}n} n^{\frac{k}{n}}|D|^{\frac{k}{n}}
\max_{F\in {\rm Gr}_{n-k}} \left(\int_{S^{n-1}\cap F} \rho_K^{n-k-1}(\theta)f(\rho_K(\theta)\theta) d\theta\right).
\end{align}
Finally, we choose $D$ so that $|D|^{1/n}\ls (1+\delta)d_{\rm ovr}(K,{\cal{BP}}_k^n)|K|^{1/n},$
and then send $\delta$ to zero. \qed

\medskip

All the other results of this section are consequences of Theorem \ref{th:intro-2}.

\medskip 

\noindent {\bf Proof of Theorem \ref{th:intro-3}.} First, we express the average section functionals ${\rm as}(K)$ and ${\rm as}(K\cap E)$ 
in terms of the radial function of $K$. Using \eqref{eq:notation-2} we write
\begin{align}\label{eq:ovr-7}{\rm as}(K) &=\int_{S^{n-1}}|K\cap\xi^{\perp }|\,d\sigma (\xi )=\omega_{n-1}\int_{S^{n-1}}
\int_{S(\xi^{\perp })}\rho_K^{n-1}(\theta )\,d\sigma_{\xi }(\theta )\,d\sigma (\xi )\\
\nonumber &=\omega_{n-1}\int_{S^{n-1}}\rho_K^{n-1}(\theta )\,d\sigma (\theta ).\end{align}
Similarly, for every $1\ls k\ls n-1$ and any $E\in {\rm Gr}_{n-k}$, we have
\begin{equation}\label{eq:ovr-8}{\rm as}(K\cap E)=\omega_{n-k-1}\int_{S_E}\rho_K^{n-k-1}(\theta )\,d\sigma_E(\theta ),\end{equation}
where $\sigma_E$ is the rotationally invariant probability measure on $S_E=S^{m-1}\cap E$.
Applying Theorem \ref{th:intro-2} for the density $f\equiv {\bf 1}$ we get
\begin{equation}\label{eq:ovr-9} \int_{S^{n-1}} \rho_K^{n-1}(\theta) \ d\theta
\ls c_{n,k}^k \ d_{\rm ovr}^k (K,{\cal{BP}}_k^n)\ |K|^{\frac kn}
\max_{E\in {\rm Gr}_{n-k}} \int_{S^{n-1}\cap E} \rho_K^{n-k-1}(\theta)\ d\theta ,
\end{equation}
and Theorem \ref{th:intro-3} follows from \eqref{eq:ovr-7} and \eqref{eq:ovr-8} and an adjustment of the constants. \qed 

\begin{remark}\label{rem:ovr}\rm For certain classes of origin-symmetric convex bodies the distance $d_{\rm ovr}(K,{\cal{BP}}_k^n)$ is
bounded by an absolute constant. These classes include unconditional convex bodies and duals of bodies
with bounded volume ratio (see \cite{Koldobsky-2015}) and the unit balls of normed spaces that embed in $L_p,\ -n<p<\infty$
(see \cite{Koldobsky-2016}, \cite{Milman-2006} and \cite{Koldobsky-Pajor}). If we restrict
Question \ref{question-intro} to any of these classes then Theorem \ref{th:intro-3} provides an affirmative answer.
\end{remark}

\noindent {\bf Proof of Theorem \ref{th:intro-4}.} We combine Theorem \ref{th:intro-2} with the following result from \cite{KPZ}: For every origin-symmetric convex body $K$
in $\R^n$,
\begin{equation}\label{eq:ovr-10}d_{\rm ovr}(K,{\cal{BP}}_k^n)\ls c\sqrt{n/k}\,[\log (en/k)]^{\frac{3}{2}},\end{equation}
where $c>0$ is an absolute constant. \qed 

\medskip

\noindent {\bf Proof of Theorem \ref{th:intro-5}.} We choose $f(x)=\|x\|_2^{-r+1}$ in Theorem \ref{th:intro-2} to get
\begin{equation}\label{eq:ovr-11} \int_{S^{n-1}} \rho_K^{n-r}(\theta)\ d\theta
\ls c_{n,k}^k \ d_{\rm ovr}^k (K,{\cal{BP}}_k^n)\ |K|^{\frac kn}
\max_{E\in {\rm Gr}_{n-k}} \int_{S^{n-1}\cap E} \rho_K^{n-k-r}(\theta)\ d\theta.
\end{equation}
Then, we apply the formula
\begin{equation}\label{eq:ovr-12}{\rm as}_r(K)= \omega_{n-r} \int_{S^{n-1}} \rho_K^{n-r}(\theta) d\sigma(\theta)\end{equation}
which generalizes \eqref{eq:ovr-7} and is easily verified in the same way. \qed

%%%%%%%%%%%%%%%%%%%%%%%%%%%%%%%%%%%%%%%%%%%%%%%%%%%%%%%%%%%%%%%%%%%%%%%%%%%%%%%%%%%%%%%%%%%%%%%%%%%%%%%%%%%%%%%%%%%%%%%%%
\section{Bounds in terms of the isotropic constant}\label{iso-bounds}
%%%%%%%%%%%%%%%%%%%%%%%%%%%%%%%%%%%%%%%%%%%%%%%%%%%%%%%%%%%%%%%%%%%%%%%%%%%%%%%%%%%%%%%%%%%%%%%%%%%%%%%%%%%%%%%%%%%%%%%%%

Let $K$ be a centered convex body in ${\mathbb R}^n$. In this section we compare ${\rm as}(K)$ with the corresponding
average section functional ${\rm as}(K\cap E)$ for any $k$-codimensional subspace $E$ of ${\mathbb R}^n$. Our main tool will be a recent result from \cite{Brazitikos-Giannopoulos-Liakopoulos-2016} which is a restricted version of Meyer's dual Loomis-Whitney inequality
\begin{equation}\label{eq:local-1}|K|^{n-1}\gr \frac{n!}{n^n}\prod_{i=1}^n|K\cap e_i^{\perp }|\end{equation}
where $\{e_1,\ldots ,e_n\}$ is any orthonormal basis of ${\mathbb R}^n$ (see \cite{Meyer-1988}) and in a sense dualizes the uniform cover inequality of Bollob\'{a}s and Thomason
(see \cite{Bollobas-Thomason-1995}). In order to give the precise statement, we introduce some notation. For every non-empty
$\tau\subset [n]:=\{1,\ldots ,n\}$ we set $F_{\tau }={\rm span}\{e_j:j\in\tau \}$ and $E_{\tau }=F_{\tau }^{\perp }$.
Given $s\gr 1$ and $\sigma\subseteq [n]$, following the terminology of \cite{Bollobas-Thomason-1995} we say that the (not necessarily distinct) sets $\sigma_1,\ldots ,\sigma_t\subseteq \sigma $ form an $s$-uniform cover of $\sigma $ if every $j\in \sigma $ belongs to exactly $s$ of the sets $\sigma_i$.
Then, \cite[Theorem~1.3]{Brazitikos-Giannopoulos-Liakopoulos-2016} states that for any centered convex body $K$ in ${\mathbb R}^n$,
for any $t\gr 1$ and any $s$-uniform cover $(\sigma_1,\ldots ,\sigma_t)$ of a subset $\sigma $ of $[n]$ we have
\begin{equation}\label{eq:local-2}\prod_{i=1}^t|K\cap E_{\sigma_i}|\ls \left (\frac{c_0t}{s}\right )^{ds}|K\cap E_{\sigma }|^s|K|^{t-s},\end{equation}
where $d=|\sigma |$. We will need a special case of this inequality. We consider $1\ls k\ls n-1$ and a $(k+1)$-tuple of orthonormal vectors
$e_1,\ldots ,e_k,e_{k+1}:=\xi $ in ${\mathbb R}^n$. Note that the sets $\sigma_1=[k]$ and $\sigma_2=\{k+1\}$ form a $1$-uniform
cover of the set $\sigma =[k+1]$. Applying \eqref{eq:local-2} with $t=2$, $s=1$ and $d=k+1$ we obtain the next lemma.

\begin{lemma}\label{lem:local-1}Let $K$ be a centered convex body in ${\mathbb R}^n$. For every $1\ls k\ls n-1$, for any $E\in {\rm Gr}_{n-k}$
and any $\xi\in S^{n-1}\cap E$ we have
\begin{equation}\label{eq:local-3}|K\cap E|\cdot |K\cap\xi^{\perp }|\ls c_0^{k+1}|K\cap E\cap\xi^{\perp }|\cdot |K|,\end{equation}
where $c_0>0$ is an absolute constant.
\end{lemma}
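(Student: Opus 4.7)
The proof is essentially a direct instantiation of the inequality \eqref{eq:local-2} from \cite{Brazitikos-Giannopoulos-Liakopoulos-2016}, with the parameters arranged exactly as indicated in the paragraph preceding the lemma. The only thing to check carefully is that the choice of orthonormal basis is compatible with the requirement that $E = E_{\sigma_1}$ and $\xi^\perp = E_{\sigma_2}$ simultaneously.

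My plan is the following. Given $E\in {\rm Gr}_{n-k}$ and $\xi \in S^{n-1}\cap E$, I would first fix an orthonormal basis $\{e_1,\ldots,e_n\}$ of ${\mathbb R}^n$ adapted to the data: choose $e_1,\ldots,e_k$ to be any orthonormal basis of $E^\perp$, set $e_{k+1}:=\xi$ (this is orthogonal to $e_1,\ldots,e_k$ precisely because $\xi\in E$), and extend arbitrarily to a full orthonormal basis. With this choice we have, in the notation of the paragraph above the lemma,
\begin{equation*}
E_{[k]} = F_{[k]}^\perp = E, \qquad E_{\{k+1\}} = \xi^\perp, \qquad E_{[k+1]} = E\cap \xi^\perp.
\end{equation*}

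Next I would set $\sigma_1 = [k]$, $\sigma_2 = \{k+1\}$ and $\sigma = [k+1]$, and verify that $(\sigma_1,\sigma_2)$ is a $1$-uniform cover of $\sigma$: every element of $[k+1]$ lies in exactly one of $\sigma_1, \sigma_2$. Then I would invoke \eqref{eq:local-2} with parameters $t=2$, $s=1$, $d=|\sigma|=k+1$, which yields
\begin{equation*}
|K\cap E|\cdot |K\cap \xi^\perp| \;\ls\; \bigl(2c_0\bigr)^{k+1}\, |K\cap E\cap \xi^\perp|\cdot |K|.
\end{equation*}
Absorbing the factor $2^{k+1}$ into the absolute constant (redefining $c_0$) gives the stated inequality.

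There is no genuine obstacle here: the entire content is already encapsulated in \eqref{eq:local-2}, and the lemma is simply the smallest nontrivial case $(t,s)=(2,1)$ of the uniform-cover framework. The only mildly delicate point is the bookkeeping that ensures the subspace $E$ of dimension $n-k$ and the particular direction $\xi\in E$ can be realized simultaneously as $E_{\sigma_1}$ and as a coordinate direction of the cover, which is taken care of by choosing the orthonormal basis so that its first $k$ vectors span $E^\perp$ and the $(k+1)$-st vector is $\xi$ itself.
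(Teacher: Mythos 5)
Your proposal is correct and is exactly the argument the paper gives: the text immediately preceding the lemma chooses the orthonormal vectors $e_1,\ldots,e_k,e_{k+1}:=\xi$, observes that $\sigma_1=[k]$ and $\sigma_2=\{k+1\}$ form a $1$-uniform cover of $\sigma=[k+1]$, and applies \eqref{eq:local-2} with $t=2$, $s=1$, $d=k+1$. Your bookkeeping of the basis (first $k$ vectors spanning $E^{\perp}$, the $(k+1)$-st equal to $\xi$) and the absorption of the factor $2^{k+1}$ into the constant are both fine.
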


Using Lemma \ref{lem:local-1} we can compare ${\rm as}(K)$ to ${\rm as}(K\cap E)$ for every $E\in {\rm Gr}_{n-k}$. We need the next well-known
properties of the parameter $M$ which was defined by \eqref{eq:notation-7}. If $D$ is a symmetric convex body in ${\mathbb R}^m$ then
for every $1\ls s\ls m-1$ and $F\in {\rm Gr}_s({\mathbb R}^m)$ we have that
\begin{equation}\label{eq:M-1}M(D\cap F)=\int_{S_F}\|\xi \|_D\,d\sigma_F (\xi )\ls c_1\sqrt{m/s}\,\int_{S^{m-1}}\|\xi \|_D\,d\sigma (\xi )
=c_1\sqrt{m/s}\,M(D),\end{equation}
where $c_1>0$ is an absolute constant. It is also known that
\begin{equation}\label{eq:M-2}\int_{S^{m-1}}\rho_D(\theta )\,d\sigma (\theta )=\int_{S^{m-1}}\|\theta \|_D^{-1}\,d\sigma (\theta )\simeq \frac{1}{M(D)}.\end{equation}
For a proof of \eqref{eq:M-1} and \eqref{eq:M-2} see \cite[Section~5.2.1]{AGA-book} and \cite[Theorem~5.8.7]{AGA-book} respectively.

\begin{theorem}\label{th:BGL-1}Let $K$ be a centered convex body in ${\mathbb R}^n$. For every $1\ls k\ls n-1$ and $E\in {\rm Gr}_{n-k}$
we have
\begin{equation}\label{eq:BGL-1}|K\cap E|\cdot {\rm as}(K)\ls  c_2^k\,{\rm as}(K\cap E)\cdot |K|,\end{equation}
where $c_2>0$ is an absolute constant.
\end{theorem}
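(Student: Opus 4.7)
The plan is to combine two independent estimates on the auxiliary quantity
\[
Q(E):=|K\cap E|\int_{S_E}|K\cap\xi^{\perp}|\,d\sigma_E(\xi).
\]
For the first estimate, Lemma~\ref{lem:local-1} gives, for each $\xi\in S_E$, the pointwise bound $|K\cap E|\cdot|K\cap\xi^{\perp}|\le c_0^{k+1}|K\cap E\cap\xi^{\perp}|\cdot|K|$. Integrating over $\xi\in S_E$ with respect to the probability measure $d\sigma_E$, the right-hand side becomes $c_0^{k+1}\,{\rm as}(K\cap E)\cdot|K|$, because for $\xi\in S_E$ the set $K\cap E\cap\xi^{\perp}$ is precisely a central hyperplane section of the $(n-k)$-dimensional body $K\cap E$. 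Thus $Q(E)\le c_0^{k+1}\,{\rm as}(K\cap E)\cdot|K|$, and it suffices to establish
\[
{\rm as}(K)\ls C^k\int_{S_E}|K\cap\xi^{\perp}|\,d\sigma_E(\xi)
\]
for some absolute constant $C>0$, since the factor $|K\cap E|$ then appears on both sides of the desired inequality.

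To prove this comparison I use the radial formula from \eqref{eq:ovr-7} to rewrite
\[
{\rm as}(K)=\omega_{n-1}\int_{S^{n-1}}\rho_K^{n-1}(\theta)\,d\sigma(\theta),
\]
and, applying the Funk formula $|K\cap\xi^{\perp}|=\omega_{n-1}\int_{S(\xi^{\perp})}\rho_K^{n-1}(\theta)\,d\sigma_{\xi}(\theta)$ inside the integral,
\[
\int_{S_E}|K\cap\xi^{\perp}|\,d\sigma_E(\xi)=\omega_{n-1}\int_{S_E}\int_{S(\xi^{\perp})}\rho_K^{n-1}(\theta)\,d\sigma_{\xi}(\theta)\,d\sigma_E(\xi).
\]
The double integral equals $\omega_{n-1}\int_{S^{n-1}}\rho_K^{n-1}(\theta)\,d\nu(\theta)$, where $d\nu$ is the marginal on $\theta$ of the probability measure $d\sigma_{\xi}(\theta)\,d\sigma_E(\xi)$ on pairs $(\xi,\theta)$ with $\xi\in S_E$ and $\xi\perp\theta$. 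Parametrize $\theta=\cos\phi\cdot\eta+\sin\phi\cdot\zeta$ with $\eta\in S_E$, $\zeta\in S_F$ for $F=E^{\perp}$, $\phi\in[0,\pi/2]$, and parametrize the pair space by $(\phi,\eta,\zeta,\xi)$ with $\xi\in S_E\cap\eta^{\perp}$. The polar decomposition of $\R^n=E\oplus F$ then produces a Beta-function Jacobian, and comparing with the analogous expression for $d\sigma(\theta)$ yields the marginal density
\[
\frac{d\nu}{d\sigma}(\theta)=\frac{\Gamma((n-1)/2)\,\Gamma((n-k)/2)}{\Gamma((n-k-1)/2)\,\Gamma(n/2)}\cdot\frac{1}{\cos\phi}\simeq\sqrt{\tfrac{n-k}{n}}\cdot\frac{1}{\cos\phi}.
\]
Since $1/\cos\phi\ge 1$, this implies $\int_{S_E}|K\cap\xi^{\perp}|\,d\sigma_E\gtrsim\sqrt{(n-k)/n}\,{\rm as}(K)$, so ${\rm as}(K)\ls\sqrt{n/(n-k)}\int_{S_E}|K\cap\xi^{\perp}|\,d\sigma_E$ up to an absolute constant. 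The bound $\sqrt{n/(n-k)}\ls(\sqrt{e})^k$, which follows from $\ln(n/(n-k))=-\ln(1-k/n)\ls k/(n-k)\ls k$ whenever $n-k\ge 1$, absorbs this factor into $C^k$, and combining with the first estimate yields the theorem with $c_2$ an absolute constant.

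The main obstacle is the Jacobian computation for the marginal measure $d\nu$. One has to identify, using the Stiefel-type symmetry of $\{(\xi,\theta):\xi\in S_E,\,\xi\perp\theta\}$ and the observation that for fixed $\theta$ with decomposition above the $\xi$-fiber is the unit sphere in $E\cap\eta^{\perp}$, that the density of $d\nu$ with respect to $d\sigma(\theta)$ equals $(\mathrm{const})/\cos\phi(\theta)$, where $\phi(\theta)$ is the angle of $\theta$ with $E$, and that the constant is exactly the displayed ratio of Gamma functions. Once this density is in hand the elementary inequality $1/\cos\phi\ge 1$ immediately supplies the one-sided comparison in the correct direction, and the rest of the argument is bookkeeping with the constants.
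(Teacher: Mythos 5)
Your first step is identical to the paper's: integrating \eqref{eq:local-3} over $\xi\in S_E$ against $\sigma_E$ reduces the theorem to the lower bound $\int_{S_E}|K\cap\xi^{\perp}|\,d\sigma_E(\xi)\gr c\sqrt{(n-k)/n}\;{\rm as}(K)$. Where you genuinely diverge is in how you prove this bound. The paper writes $\int_{S_E}|K\cap\xi^{\perp}|\,d\sigma_E=\int_{S_E}\rho_{IK}\,d\sigma_E$ and runs the chain $\int_{S_E}\rho_{IK}\,d\sigma_E\simeq M(IK\cap E)^{-1}\gr c\sqrt{(n-k)/n}\,M(IK)^{-1}\simeq\sqrt{(n-k)/n}\,{\rm as}(K)$, which uses the convexity of the intersection body $IK$ together with \eqref{eq:M-1} and both directions of \eqref{eq:M-2}. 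You instead compute the $\theta$-marginal of $d\sigma_{\xi}(\theta)\,d\sigma_E(\xi)$ directly. Your formula is correct: with $\theta=\cos\phi\,\eta+\sin\phi\,\zeta$, the two polar decompositions give $\frac{d\nu}{d\sigma}(\theta)=A_{n,k}/\cos\phi$ with $A_{n,k}=B(\tfrac{n-k}{2},\tfrac{k}{2})/B(\tfrac{n-k-1}{2},\tfrac{k}{2})=\frac{\Gamma(\frac{n-1}{2})\Gamma(\frac{n-k}{2})}{\Gamma(\frac{n-k-1}{2})\Gamma(\frac{n}{2})}\simeq\sqrt{(n-k)/n}$ (the constant is forced by the normalization $\int\frac{d\nu}{d\sigma}\,d\sigma=1$, which is a useful check), and since $1/\cos\phi\gr 1$ and $\rho_K^{n-1}\gr 0$ the lower bound follows. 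Your closing estimate $\sqrt{n/(n-k)}\ls e^{k/2}$ is also fine. What your route buys is that the key step becomes an exact identity plus positivity, valid for arbitrary star bodies with no convexity input; what the paper's route buys is brevity, at the cost of invoking the two-sided comparison $\int\rho_D\,d\sigma\simeq 1/M(D)$ for $D=IK$, whose upper-bound direction is a delicate small-ball-type statement when $IK$ is very eccentric. In this respect your argument is, if anything, the more robust of the two.

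One caveat you should state explicitly: your parametrization requires $\dim E=n-k\gr 2$, so that $S(E\cap\xi^{\perp})$ is a nonempty sphere and the marginal of $\eta'$ over $\xi\in S_E$ is uniform on $S_E$. The case $k=n-1$ degenerates ($E\cap\xi^{\perp}=\{0\}$), and the inequality your scheme would then need, namely $|K\cap\xi_0^{\perp}|\gr c\,n^{-1/2}\,{\rm as}(K)$ for a single fixed $\xi_0$, is false for very elongated boxes; so either restrict to $1\ls k\ls n-2$ or give a separate argument for the extreme codimension.
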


\begin{proof}We consider an orhonormal basis $\{e_1,\ldots ,e_k\}$ of $E^{\perp }$ and any unit vector $\xi\in E$. From Lemma \ref{lem:local-1} we have
\begin{equation}\label{eq:BGL-2}|K\cap E|\cdot |K\cap\xi^{\perp }|\ls c_0^{k+1}|K\cap E\cap\xi^{\perp }|\cdot |K|.\end{equation}
Integrating \eqref{eq:BGL-2} with respect to $\xi\in S_E$ we see that
\begin{align}\label{eq:BGL-3}|K\cap E|\cdot \int_{S_E}|K\cap \xi^{\perp }|d\sigma_E(\xi )
&\ls c_0^{k+1}\int_{S_E}|(K\cap E)\cap\xi^{\perp }|d\sigma_E(\xi )\cdot |K|\\
\nonumber &= c_0^{k+1}\,{\rm as}(K\cap E)\cdot |K|.\end{align}
Applying \eqref{eq:M-2} for the symmetric convex body $IK\cap E$ we see that
\begin{equation}\label{eq:BGL-4}\int_{S_E}|K\cap\xi^{\perp }|d\sigma_E (\xi )
=\int_{S_E}\rho_{IK}(\xi )\,d\sigma_E (\xi )\simeq \frac{1}{M(IK\cap E)},\end{equation}
and hence, using \eqref{eq:M-1} with $m=n$ and $s=n-k$ and then applying \eqref{eq:M-2} for the body $IK$ this time, we obtain
\begin{align}\int_{S_E}|K\cap\xi^{\perp }|d\sigma_E (\xi )&\gr \frac{c\sqrt{n-k}}{\sqrt{n}}\frac{1}{M(IK)}\simeq \frac{\sqrt{n-k}}{\sqrt{n}}\int_{S^{n-1}}\rho_{IK}(\xi )\,d\sigma (\xi  )\\
\nonumber &= \frac{\sqrt{n-k}}{\sqrt{n}}\int_{S^{n-1}}|K\cap\xi^{\perp }|\,d\sigma (\xi )=\frac{\sqrt{n-k}}{\sqrt{n}}{\rm as}(K).
\end{align}
Therefore,
\begin{equation}\label{eq:BGL-5}|K\cap E|\,{\rm as}(K)\ls \frac{c_1\sqrt{n}}{\sqrt{n-k}}c_0^{k+1}\,{\rm as}(K\cap E)\cdot |K|\ls
c_2^k\,{\rm as}(K\cap E)\cdot |K|\end{equation}
for every $E\in {\rm Gr}_{n-k}$. \end{proof}

\medskip

For the proof of Theorem \ref{th:intro-6} we use Theorem \ref{th:BGL-1} and estimates for the dual affine quermassintegrals of a centered convex body $K$: these
are defined, for any $1\ls k\ls n-1$, as follows:
\begin{equation}\label{eq:main-3}\tilde{R}_k(K):=\frac{1}{|K|^{n-k}}\int_{{\rm Gr}_{n-k}}|K\cap E|^n\,d\nu_{n-k}(E).\end{equation}
The quantities $\tilde{R}_k(K)$ were introduced by Lutwak in \cite{Lutwak-1984} and \cite{Lutwak-1988}. More precisely, he considered
the quantities $\tilde{\Phi}_k(K)$ that were introduced in \eqref{eq:intro-15},
which clearly satisfy the identity
\begin{equation}\tilde{\Phi }_k(K)=\frac{\omega_n}{\omega_{n-k}}|K|^{\frac{n-k}{n}}\big [\tilde{R}_k(K)\big ]^{\frac{1}{n}}.\end{equation}
Grinberg proved in \cite{Grinberg-1990} that the quantity $\tilde{R}_k(K)$ is invariant under $T\in GL(n)$: one has
\begin{equation}\label{eq:main-5}\tilde{R}_k(T(K))=\tilde{R}_k(K)\end{equation}
for every $T\in GL(n)$. He also proved that
\begin{equation}\label{eq:main-4}\tilde{R}_k(K)\ls \tilde{R}_k(B_2^n):=\frac{\omega_{n-k}^n}{\omega_n^{n-k}}\ls e^{\frac{kn}{2}}.\end{equation}
On the other hand, it was observed by Dafnis and Paouris in \cite{Dafnis-Paouris-2012} that
\begin{equation}\label{eq:main-6}\tilde{R}_k(K)\gr \left (\frac{c_4}{L_K}\right )^{kn},\end{equation}
where $c_4>0$ is an absolute constant. We will use this lower bound, which is an immediate consequence of \eqref{eq:main-5}
and of the fact that if $K$ is isotropic then $|K\cap E|^{\frac{1}{k}}\gr\frac{c_4}{L_K}$ for every $E\in {\rm Gr}_{n-k}$ (see \cite[Proposition~5.1.15]{BGVV-book} for a proof).

\medskip

\noindent {\bf Proof of Theorem \ref{th:intro-6}.}  Let $K$ be a centered convex body in ${\mathbb R}^n$ and fix $1\ls k\ls n-1$. From Theorem
\ref{th:BGL-1} we know that for every $E\in {\rm Gr}_{n-k}$ we have
\begin{equation}\label{eq:BGL-15}|K\cap E|\cdot {\rm as}(K)\ls  c_2^k\,{\rm as}(K\cap E)\cdot |K|,\end{equation}
where $c_2>0$ is an absolute constant. Therefore,
\begin{equation}\label{eq:BGL-6}\max_{E\in {\rm Gr}_{n-k}}\,|K\cap E|\cdot {\rm as}(K)\ls c_2^k\,\max_{E\in {\rm Gr}_{n-k}}{\rm as}(K\cap E)\cdot |K|.\end{equation}
Next, from \eqref{eq:main-5} we see that
\begin{equation}\label{eq:BGL-8}\max_{E\in {\rm Gr}_{n-k}}|K\cap E|\gr \left (\int_{{\rm Gr}_{n-k}}|K\cap E|^n\,d\nu_{n-k}(E)\right )^{\frac{1}{n}}\gr \left (\frac{c_4}{L_K}\right )^k\,|K|^{\frac{n-k}{n}}.\end{equation}
Going back to \eqref{eq:BGL-6} we see that
\begin{equation}\label{eq:BGL-9}\left (\frac{c_4}{L_K}\right )^k|K|^{\frac{n-k}{n}}{\rm as}(K)\ls c_2^k|K|\,\max_{E\in {\rm Gr}_{n-k}}{\rm as}(K\cap E),\end{equation}
and this proves Theorem \ref{th:intro-6}. \prend

\medskip

The next proposition shows that if $K$ is isotropic and if we consider the hyperplane case (where $k=1$) then the estimate of
Theorem \ref{th:intro-6} is sharp: we have an asymptotic formula.

\begin{proposition}\label{prop:isotropic}Let $K$ be an isotropic convex body in ${\mathbb R}^n$. Then,
${\rm as}(K)\simeq L_K^{-1}$ and ${\rm as}(K\cap\xi^{\perp })\simeq L_K^{-2}$ for all $\xi\in S^{n-1}$.
In particular,
\begin{equation}\label{eq:isotropic-1}{\rm as}(K)\simeq L_K\,|K|^{\frac{1}{n}}\max_{\xi\in S^{n-1}}{\rm as}(K\cap\xi^{\perp }).\end{equation}
\end{proposition}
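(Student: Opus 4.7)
\medskip

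\noindent\textbf{Proof proposal for Proposition \ref{prop:isotropic}.} Since $K$ is isotropic we may assume $|K|=1$, so the factor $|K|^{1/n}$ drops out and \eqref{eq:isotropic-1} reduces to ${\rm as}(K)\simeq L_K\max_{\xi}{\rm as}(K\cap\xi^{\perp})$. The plan is to establish the two intermediate asymptotic formulas ${\rm as}(K)\simeq L_K^{-1}$ and ${\rm as}(K\cap\xi^{\perp})\simeq L_K^{-2}$ separately; once we have these, \eqref{eq:isotropic-1} is a one-line consequence. Note that the $\lesssim$ direction of \eqref{eq:isotropic-1} is already contained in Theorem \ref{th:intro-6} applied with $k=1$, so our real task is the lower bound on ${\rm as}(K)$.

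\emph{Step 1 (hyperplane sections).} For every $\xi\in S^{n-1}$, Hensley's inequality together with the standard log-concavity lower bound (equivalently, the $k=1$ case of Proposition~5.1.15 in \cite{BGVV-book} used already in the proof of Theorem~\ref{th:intro-6}) gives $|K\cap\xi^{\perp}|\simeq L_K^{-1}$. Integrating in $\xi$ over $S^{n-1}$ against $\sigma$ immediately yields ${\rm as}(K)\simeq L_K^{-1}$.

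\emph{Step 2 (two-codimensional sections).} Fix $\xi\in S^{n-1}$ and $\eta\in S(\xi^{\perp})$. The two-dimensional marginal of the uniform probability measure on $K$ onto $\mathrm{span}(\xi,\eta)$ is a centered log-concave density on $\mathbb{R}^2$ with covariance matrix $L_K^2 I_2$ (by isotropy), and its value at the origin equals $|K\cap\xi^{\perp}\cap\eta^{\perp}|_{n-2}$. The lower bound $|K\cap\xi^{\perp}\cap\eta^{\perp}|_{n-2}\gtrsim L_K^{-2}$ is the $k=2$ instance of Proposition~5.1.15 in \cite{BGVV-book}. The matching upper bound $|K\cap\xi^{\perp}\cap\eta^{\perp}|_{n-2}\lesssim L_K^{-2}$ follows from the boundedness of the isotropic constant in dimension two (so rescaling the marginal by $L_K$ produces an isotropic log-concave density on $\mathbb{R}^2$ with density at the origin comparable to an absolute constant). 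Averaging this uniform estimate over $\eta\in S(\xi^{\perp})$ gives ${\rm as}(K\cap\xi^{\perp})\simeq L_K^{-2}$, independently of $\xi$.

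\emph{Step 3 (combining).} From Steps~1 and~2 and $|K|=1$,
\[
L_K\,|K|^{1/n}\max_{\xi\in S^{n-1}}{\rm as}(K\cap\xi^{\perp})\simeq L_K\cdot L_K^{-2}=L_K^{-1}\simeq {\rm as}(K),
\]
which is \eqref{eq:isotropic-1}.

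The only nontrivial point is the uniform upper bound on the $(n-2)$-dimensional volumes in Step~2, which relies on the (known) boundedness of the two-dimensional isotropic constant; the rest is a routine averaging argument based on facts already invoked in the proof of Theorem~\ref{th:intro-6}.
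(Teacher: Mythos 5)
Your proposal is correct and follows essentially the same route as the paper: both rest on the two-sided estimate $c_1/L_K\ls |K\cap E|^{1/k}\ls c_2(k)/L_K$ (Proposition~5.1.15 in \cite{BGVV-book}) applied with $k=1$ and $k=2$, followed by averaging over the sphere. Your Step~2 merely unpacks the upper bound of that cited estimate (via the two-dimensional marginal and the boundedness of $L_2$), which the paper takes as a black box.
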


\begin{proof}It is a general fact (following from \cite[Proposition~5.1.15]{BGVV-book}) that if $K$ is an isotropic convex body then, for every $E\in {\rm Gr}_{n-k}$ we have
\begin{equation}\label{eq:isotropic-2}\frac{c_1}{L_K}\ls |K\cap E|^{\frac{1}{k}}\ls \frac{cL_k}{L_K}\ls \frac{c_2(k)}{L_K},\end{equation}
where $c_1>0$ is an absolute constant and $c_2(k)$ is a positive constant depending only on $k$ (in fact, $c_2(k)\ls c\sqrt[4]{k}$ by
Klartag's estimate on $L_k$). Applying \eqref{eq:isotropic-2} with $k=1$ we see that all hyperplane sections $K\cap\xi^{\perp }$
of $K$ have volume equal (up to an absolute constant) to $L_K^{-1}$. In particular,
\begin{equation}\label{eq:isotropic-3}{\rm as}(K)=\int_{S^{n-1}}|K\cap \xi^{\perp }|\,d\sigma (\xi )\simeq L_K^{-1}.\end{equation}
Applying \eqref{eq:isotropic-2} with $k=2$ we see that all $2$-codimensional sections $K\cap E$ of $K$ have volume equal
(up to an absolute constant) to $L_K^{-2}$. In particular, for every $\xi\in S^{n-1}$ we get
\begin{equation}\label{eq:isotropic-4}{\rm as}(K\cap \xi^{\perp })=\int_{S(\xi^{\perp })}|K\cap E_{\xi ,\theta }|\,d\sigma_{\xi }(\theta )\simeq
L_K^{-2},\end{equation}
where $E_{\xi ,\theta }=[{\rm span}\{\xi ,\theta\}]^{\perp }$. This shows that
\begin{equation}\label{eq:isotropic-5}{\rm as}(K)\simeq L_K\,{\rm as}(K\cap\xi^{\perp })=L_K\,{\rm as}(K\cap\xi^{\perp })\,|K|^{\frac{1}{n}}\end{equation}
In particular, \eqref{eq:isotropic-5} implies \eqref{eq:isotropic-1}. \end{proof}

\begin{remark}\rm Proposition \ref{prop:isotropic} and the definition of $\gamma_{n,1}$ show that
\begin{equation}\label{eq:worse-k-1}L_K^{-1}\simeq {\rm as}(K)\ls \gamma_{n,1}\,\max_{\xi\in S^{n-1}}{\rm as}(K\cap \xi^{\perp })\simeq \gamma_{n,1}L_K^{-2}\end{equation}
for every isotropic convex body $K$ in ${\mathbb R}^n$. Therefore, $L_K\ls c\gamma_{n,1}$ for some absolute constant $c>0$, which implies
that
\begin{equation}\label{eq:worse-k-2}L_n\ls c\gamma_{n,1}.\end{equation}
Note that by Theorem \ref{th:intro-6} we can then conclude that $\gamma_{n,k}\ls cL_n\ls c\gamma_{n,1}$.
Finally, Theorem \ref{th:intro-6} shows that $\gamma_{n,1}\ls c^{\prime }L_n$. We summarize in the next proposition.
\end{remark}

\begin{proposition}\label{prop:worse-k}For any $1\ls k\ls n-1$ we have
\begin{equation}\label{eq:worse-k-3}\gamma_{n,k}\lesssim \gamma_{n,1}\simeq L_n,\end{equation}
where $c>0$ is an absolute constant.
\end{proposition}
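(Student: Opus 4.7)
The plan is to unpack the inequality chain $\gamma_{n,k}\lesssim \gamma_{n,1}\simeq L_n$ into three separate assertions, each of which reduces to a previously established result in the paper: the upper bound $\gamma_{n,k}\ls c\,L_n$ (for every $k$), the matching upper bound $\gamma_{n,1}\ls c\,L_n$, and the reverse bound $L_n\ls c\,\gamma_{n,1}$. Once the three are in hand, transitivity gives the statement.

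First, I would apply Theorem \ref{th:intro-6} directly: for any centered convex body $K$ in $\R^n$ and any $1\ls k\ls n-1$,
$$
{\rm as}(K)\ls (c_2 L_K)^k\,|K|^{\frac{k}{n}}\,\max_{E\in {\rm Gr}_{n-k}}{\rm as}(K\cap E),
$$
so $c_2 L_K$ is admissible in the definition of $\gamma_{n,k}$. Hence $\gamma_{n,k}\ls c_2 L_K$ for every centered $K$; taking supremum over all $K$ (equivalently, over all isotropic representatives) yields $\gamma_{n,k}\ls c_2 L_n$. The case $k=1$ in particular produces $\gamma_{n,1}\ls c_2 L_n$, which is one half of the desired equivalence.

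For the reverse bound $L_n\lesssim \gamma_{n,1}$, I would invoke Proposition \ref{prop:isotropic}. Fix an isotropic convex body $K$ in $\R^n$ (so $|K|=1$); the proposition gives ${\rm as}(K)\simeq L_K^{-1}$ and ${\rm as}(K\cap\xi^{\perp})\simeq L_K^{-2}$ for every $\xi\in S^{n-1}$. Plugging these asymptotics into the defining inequality of $\gamma_{n,1}$,
$$
L_K^{-1}\simeq {\rm as}(K)\ls \gamma_{n,1}\,|K|^{\frac{1}{n}}\,\max_{\xi\in S^{n-1}}{\rm as}(K\cap\xi^{\perp})\simeq \gamma_{n,1}\,L_K^{-2},
$$
and rearranging gives $L_K\lesssim \gamma_{n,1}$. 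Since this is uniform in the choice of isotropic $K$, I may pass to the supremum to conclude $L_n\lesssim \gamma_{n,1}$. Combined with the previous step this gives $\gamma_{n,1}\simeq L_n$, and combined with $\gamma_{n,k}\ls c_2 L_n$ this yields $\gamma_{n,k}\lesssim \gamma_{n,1}$ for every $1\ls k\ls n-1$, completing the proof.

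There is essentially no obstacle here: all the real work has already been done in Theorem \ref{th:intro-6} (via the dual Loomis--Whitney variant of \cite{Brazitikos-Giannopoulos-Liakopoulos-2016} and the dual affine quermassintegral lower bound of \cite{Dafnis-Paouris-2012}) and in Proposition \ref{prop:isotropic}. The only task is to notice that these two results are, respectively, an upper bound on $\gamma_{n,k}$ in terms of $L_n$ and a lower bound on $\gamma_{n,1}$ in terms of each individual $L_K$, so that the supremum definition of $L_n$ closes the loop.
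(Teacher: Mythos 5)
Your proposal is correct and follows essentially the same route as the paper: Theorem \ref{th:intro-6} gives $\gamma_{n,k}\ls c_2L_n$ for every $k$ (in particular for $k=1$), and Proposition \ref{prop:isotropic} applied to an isotropic body gives $L_K\lesssim\gamma_{n,1}$, hence $L_n\lesssim\gamma_{n,1}$, and the chain closes. One tiny wording slip: the intermediate claim ``$\gamma_{n,k}\ls c_2L_K$ for every centered $K$'' is not literally true (that would bound $\gamma_{n,k}$ by the infimum of $L_K$); the correct statement, which you reach anyway, is that the uniform constant $c_2L_n$ is admissible for all bodies simultaneously, whence $\gamma_{n,k}\ls c_2L_n$.
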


Proposition \ref{prop:worse-k} shows that a positive answer to Question \ref{question-intro} (actually, in the case $k=1$) is equivalent to the uniform boundedness
of the isotropic constants of all convex bodies in all dimensions (this is exactly the hyperplane conjecture). It also shows that
the question becomes ``easier" when the codimension $k$ increases, in the sense that $\gamma_{n,k}\lesssim \gamma_{n,1}$. In fact, we
can show that if $k$ is proportional to $n$ then $\gamma_{n,k}$ is bounded (this is precisely the content of Theorem \ref{th:intro-7}):

\medskip

\begin{theorem}\label{th:gamma-DP}For any $1\ls k\ls n-1$ we have
\begin{equation}\label{eq:worse-k-4}\gamma_{n,k}\ls c\sqrt{n/k}\,[\log (en/k)]^{\frac{3}{2}},\end{equation}
where $c>0$ is an absolute constant.
\end{theorem}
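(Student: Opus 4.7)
\medskip

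\noindent The plan is to mirror the proof of Theorem \ref{th:intro-6}, but to replace the Dafnis--Paouris lower bound $\tilde R_k(K)^{1/n}\gr (c_4/L_K)^k$ by an estimate independent of the isotropic constant of $K$, drawn from \cite{Dafnis-Paouris-2012} and \cite[Section~6.4]{BGVV-book}. As in Section \ref{iso-bounds}, the starting point is Theorem \ref{th:BGL-1}: for every $E\in {\rm Gr}_{n-k}$,
\begin{equation*}
|K\cap E|\cdot {\rm as}(K)\ls c_2^k\,{\rm as}(K\cap E)\cdot |K|,
\end{equation*}
and taking the maximum over $E\in {\rm Gr}_{n-k}$ on both sides reduces the problem to producing a lower bound for $\max_{E\in {\rm Gr}_{n-k}}|K\cap E|$ of the form $(c/h(n/k))^k\,|K|^{(n-k)/n}$.

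\medskip

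\noindent The second step is the standard passage from the maximum to the $L^n$--average over the Grassmannian and the identification with the dual affine quermassintegral,
\begin{equation*}
\max_{E\in {\rm Gr}_{n-k}}|K\cap E|\gr\left(\int_{{\rm Gr}_{n-k}}|K\cap E|^n\,d\nu_{n-k}(E)\right)^{\!1/n}=|K|^{(n-k)/n}\,\tilde R_k(K)^{1/n},
\end{equation*}
followed by the $L_K$-free lower bound for $\tilde R_k(K)^{1/n}$ in the form
\begin{equation*}
\tilde R_k(K)^{1/n}\gr \left(\frac{c}{h(n/k)}\right)^{k},\qquad h(t)=\sqrt t\,(\log(et))^{3/2},
\end{equation*}
which is valid for every centered convex body $K$ in $\R^n$ and every $1\ls k\ls n-1$. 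Substituting this into the inequality produced in the first step yields
\begin{equation*}
{\rm as}(K)\ls (c_3\,h(n/k))^k\,|K|^{k/n}\,\max_{E\in {\rm Gr}_{n-k}}{\rm as}(K\cap E),
\end{equation*}
which is exactly the asserted bound $\gamma_{n,k}\ls c\sqrt{n/k}\,(\log(en/k))^{3/2}$.

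\medskip

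\noindent The main obstacle is the third step, namely correctly invoking the $L_K$-free lower bound for $\tilde R_k(K)^{1/n}$. When $k$ is proportional to $n$, this is a direct consequence of the Dafnis--Paouris estimate, which yields the constant bound $\tilde R_k(K)^{1/n}\gr c^k$; to cover the full range $1\ls k\ls n-1$ with the correct $h(n/k)$ dependence one must trace through the optimization in \cite[Section~6.4]{BGVV-book}, combining E.~Milman's inequality $L_K\ls cL_k\,d_{\rm ovr}(K,{\cal{BP}}_k^n)$, Klartag's bound $L_k=O(\sqrt[4]{k})$ and the KPZ estimate $d_{\rm ovr}(K,{\cal{BP}}_k^n)\ls c\sqrt{n/k}\,(\log(en/k))^{3/2}$, and verifying (after the $GL(n)$-averaging afforded by Grinberg's identity \eqref{eq:main-5}) that the resulting exponent is comparable to $h(n/k)^k$. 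Once this ingredient is in place, the remainder of the argument is a formal substitution into Theorem \ref{th:BGL-1}.
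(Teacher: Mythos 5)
Your first two steps are exactly the paper's proof: Theorem \ref{th:gamma-DP} is obtained by repeating the proof of Theorem \ref{th:intro-6} verbatim, with the $L_K$-free lower bound $\tilde{R}_k(K)\gr \big(c_5/h(n/k)\big)^{kn}$ --- which is precisely \cite[Theorem~1.3]{Dafnis-Paouris-2012}, quoted in the paper as \eqref{eq:main-7} --- substituted for \eqref{eq:main-6}. So the argument is correct provided you invoke that theorem directly as a black box.

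One warning about your final paragraph, where you treat the $L_K$-free bound as the ``main obstacle'' and sketch a derivation of it: the route you propose, namely $\tilde{R}_k(K)^{1/n}\gr (c_4/L_K)^k$ combined with $L_K\ls cL_k\,d_{\rm ovr}(K,{\cal{BP}}_k^n)$, $L_k=O(\sqrt[4]{k})$ and the KPZ estimate, only yields
\begin{equation*}
\tilde{R}_k(K)^{1/n}\gr \left(\frac{c}{k^{1/4}\,h(n/k)}\right)^{k},
\end{equation*}
which is off by the factor $k^{k/4}$ and therefore gives $\gamma_{n,k}\ls c\,k^{1/4}h(n/k)$ rather than \eqref{eq:worse-k-4}; in the most interesting regime $k\simeq n$ this loses a factor $n^{1/4}$ and does not recover the bounded constant. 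No such detour is needed (or used in \cite{Dafnis-Paouris-2012}, whose proof does not pass through the isotropic constant): the estimate in the exact form you need is the statement of \cite[Theorem~1.3]{Dafnis-Paouris-2012}, and once it is cited the rest of your argument is complete.
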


\begin{proof}We repeat the proof of Theorem \ref{th:intro-6} using the estimate (see \cite[Theorem~1.3]{Dafnis-Paouris-2012})
\begin{equation}\label{eq:main-7}\tilde{R}_k(K)\gr \left (\frac{c_5}{\sqrt{n/k}\,[\log (en/k)]^{\frac{3}{2}}}\right )^{kn}\end{equation}
instead of \eqref{eq:main-6}. \end{proof}

\begin{remark}\label{rem:dual-affine}\rm Let $\alpha_{n,k}$ be the largest constant $\alpha >0$ with the property that $\tilde{R}_k(K)\gr \alpha^{kn}$
for every centered convex body $K$ in ${\mathbb R}^n$. Repeating the proof of Theorem \ref{th:intro-6} or Theorem \ref{th:gamma-DP} we see that
\begin{equation}\label{eq:worse-k-5}\gamma_{n,k}\ls \frac{c_1}{\alpha_{n,k}},\end{equation}
where $c_1>0$ is an absolute constant. In particular,
\begin{equation}\gamma_{n,k}\lesssim \gamma_{n,1}\simeq L_n\lesssim \alpha_{n,1}^{-1}.\end{equation}
On the other hand, \eqref{eq:main-6} shows that $\alpha_{n,k}\gr c/L_n$
for all $1\ls k\ls n-1$, and hence $\alpha_{n,1}^{-1}\lesssim L_n$. Therefore,
\begin{equation}\gamma_{n,1}\simeq L_n\simeq \alpha_{n,1}^{-1}.\end{equation}
In other words, the question whether
\begin{equation}\label{eq:worse-k-6}\tilde{R}_k(K)\gr c^{kn}\end{equation}
for all $1\ls k\ls n-1$ which is studied in \cite{Dafnis-Paouris-2012} (see also \cite[Section~9.4]{Gardner-book}) is equivalent to the
hyperplane conjecture and to Question \ref{question-intro}.
\end{remark}

%%%%%%%%%%%%%%%%%%%%%%%%%%%%%%%%%%%%%%%%%%%%%%%%%%%%%%%%%%%%%%%%%%%%%%%%%%%%%%%%%%%%%%%%%%%%%%%%%%%%%%%%%%%%%%%%%%%%%%%
\section{Reverse inequalities in the classical positions}\label{positions}
%%%%%%%%%%%%%%%%%%%%%%%%%%%%%%%%%%%%%%%%%%%%%%%%%%%%%%%%%%%%%%%%%%%%%%%%%%%%%%%%%%%%%%%%%%%%%%%%%%%%%%%%%%%%%%%%%%%%%%%

Next, we pass to estimates for the mean value of the average section functional of hyperplane sections of $K$. We start by expressing
${\rm as}(K)$ in terms of dual mixed volumes. Note that by \eqref{eq:ovr-7} we have
\begin{equation}\label{eq:DMX-3}{\rm as}(K)=\omega_{n-1}\int_{S^{n-1}}\rho_K^{n-1}(\theta )\,d\sigma (\theta )=\frac{\omega_{n-1}}{\omega_n}\tilde{V}(K,\ldots ,K,B_2^n),
\end{equation}
and using \eqref{eq:ovr-8} we see that
\begin{equation}\label{eq:DMX-4}\int_{G_{n,,n-k}}{\rm as}(K\cap E)\,d\nu_{n-k}(E)=\omega_{n-k-1}\int_{S^{n-1}}\rho_K^{n-k-1}(\theta )\,d\sigma (\theta )
=\frac{\omega_{n-k-1}}{\omega_n}\tilde{V}(K\,[n-k-1],B_2^n\,[k+1]),\end{equation}
where $A\,[s]$ means $A,\ldots ,A$ repeated $s$-times.

\begin{theorem}\label{th:BGL-2}Let $K$ be a centered convex body in ${\mathbb R}^n$. Then,
\begin{equation}\label{eq:BGL-10}{\rm as}(K)^{k+1}\ls c^k\,|K|^k\,\int_{{\rm Gr}_{n-k}}{\rm as}(K\cap E)\,d\nu_{n-k} (E)\end{equation}
and
\begin{equation}\label{eq:BGL-10b}\int_{{\rm Gr}_{n-k}}{\rm as}(K\cap E)\,d\nu_{n-k} (E)\ls c^k\,{\rm as}(K)^{\frac{n-k-1}{n-1}}, \end{equation}
where $c>0$ is an absolute constant.
\end{theorem}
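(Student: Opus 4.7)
Set $I_m := \int_{S^{n-1}}\rho_K^m(\theta)\,d\sigma(\theta)$. Combining \eqref{eq:DMX-3}, \eqref{eq:DMX-4} and \eqref{eq:notation-2}, one rewrites
\[
{\rm as}(K)=\omega_{n-1}I_{n-1},\qquad \int_{{\rm Gr}_{n-k}}{\rm as}(K\cap E)\,d\nu_{n-k}(E)=\omega_{n-k-1}I_{n-k-1},\qquad |K|=\omega_n I_n,
\]
so each of \eqref{eq:BGL-10} and \eqref{eq:BGL-10b} reduces to a pure integral inequality between $I_{n-k-1}$, $I_{n-1}$, $I_n$, multiplied by an explicit ratio of the $\omega_m$'s. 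These integral inequalities will be standard applications of H\"older/Jensen on the probability space $(S^{n-1},\sigma)$, and the only remaining task is to verify that the resulting leading constants are $\ls c^k$ for some absolute $c>0$.

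For \eqref{eq:BGL-10}, the plan is to use the interpolation $n-1=\tfrac{k}{k+1}\cdot n+\tfrac{1}{k+1}\cdot(n-k-1)$ and apply H\"older with conjugate exponents $\tfrac{k+1}{k}$ and $k+1$:
\[
I_{n-1}=\int_{S^{n-1}}(\rho_K^n)^{k/(k+1)}(\rho_K^{n-k-1})^{1/(k+1)}\,d\sigma \ \ls\ I_n^{k/(k+1)}\,I_{n-k-1}^{1/(k+1)}.
\]
Raising to the $(k+1)$-th power and multiplying by $\omega_{n-1}^{k+1}$ then gives \eqref{eq:BGL-10} with leading constant $\omega_{n-1}^{k+1}/(\omega_n^k\,\omega_{n-k-1})$. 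For \eqref{eq:BGL-10b}, the exponent $\tfrac{n-k-1}{n-1}$ is at most $1$, so Jensen's inequality applied to the concave map $t\mapsto t^{(n-k-1)/(n-1)}$ with respect to the probability measure $\sigma$ yields
\[
I_{n-k-1}=\int_{S^{n-1}}\bigl(\rho_K^{n-1}(\theta)\bigr)^{(n-k-1)/(n-1)}\,d\sigma(\theta)\ \ls\ I_{n-1}^{(n-k-1)/(n-1)},
\]
and multiplying by $\omega_{n-k-1}$ produces \eqref{eq:BGL-10b} with leading constant $\omega_{n-k-1}/\omega_{n-1}^{(n-k-1)/(n-1)}$.

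The main obstacle, although routine, is to check that both leading constants are $\ls c^k$. Writing $\omega_m=\pi^{m/2}/\Gamma(m/2+1)$, in each case the powers of $\pi$ cancel and one is left with an explicit Gamma-function ratio. For the first constant, Stirling's formula reduces the question to bounding the quantity $k\,g(n)-(k+1)\,g(n-1)+g(n-k-1)$ with $g(x)=x\log x$, which a Taylor expansion shows is $O(k)$ uniformly in $n$ (the leading behaviour being $\asymp k^2/n$ when $k\ll n$ and $\asymp n$ when $n-k$ is of order one); for the second, the leading contribution after Stirling is $\tfrac{n-k-1}{2}\log\tfrac{n-1}{n-k-1}\leq k/2$, via the elementary inequality $\log(1+t)\leq t$. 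This is exactly the same flavor of constant-tracking used earlier in the paper when checking $b_{n,k}\simeq 1$ and $\phi_{n,k,r}\simeq\sqrt{n/(n-r)}$.
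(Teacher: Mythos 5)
Your proposal is correct and follows essentially the same route as the paper: the first inequality is exactly the paper's H\"older interpolation $\left(\int\rho_K^{n-1}\,d\sigma\right)^{k+1}\ls\left(\int\rho_K^{n}\,d\sigma\right)^{k}\int\rho_K^{n-k-1}\,d\sigma$, and the second is the paper's H\"older step (your Jensen formulation is the same estimate on the probability space $(S^{n-1},\sigma)$), with identical leading constants $\omega_{n-1}^{k+1}/(\omega_n^k\omega_{n-k-1})$ and $\omega_{n-k-1}\,\omega_{n-1}^{-(n-k-1)/(n-1)}$. The constant-tracking, which the paper dismisses as "a simple computation," is handled adequately by your Stirling sketch.
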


\begin{proof}From H\"{o}lder's inequality we see that
\begin{equation}\left (\int_{S^{n-1}}\rho_K^{n-1}(\theta )\,d\sigma (\theta )\right )^{k+1}\ls
\left (\int_{S^{n-1}}\rho_K^n(\theta )\,d\sigma (\theta )\right )^k\left (\int_{S^{n-1}}\rho_K^{n-k-1}(\theta )\,d\sigma (\theta )\right ),\end{equation}
which can be equivalently written as
\begin{equation}\label{eq:BGL-11}\tilde{V}(K,\ldots ,K,B_2^n)^{k+1}\ls |K|^k\,\tilde{V}(K\,[n-k-1],B_2^n\,[k+1]).\end{equation}
Taking into account \eqref{eq:DMX-3} and \eqref{eq:DMX-4} we rewrite \eqref{eq:BGL-11} in the form
\begin{equation}\label{eq:BGL-12}{\rm as}(K)^{k+1}\ls |K|^k\,\frac{\omega_{n-1}^{k+1}}{\omega_n^k\omega_{n-k-1}}\int_{{\rm Gr}_{n-k}}{\rm as}(K\cap E)\,d\nu_{n-k} (E).\end{equation}
A simple computation shows that $\frac{\omega_{n-1}^{k+1}}{\omega_n^k\omega_{n-k-1}}<c^k$ for an absolute constant $c>0$, and \eqref{eq:BGL-10} follows.

On the other hand, by H\"{o}lder's inequality,
\begin{align}\label{eq:BGL-13}&\frac{1}{\omega_{n-k-1}}\int_{{\rm Gr}_{n-k}}{\rm as}(K\cap E)\,d\nu_{n-k} (E) =\int_{S^{n-1}}\rho_K^{n-k-1}(\theta )\,d\sigma (\theta )\\
\nonumber &\hspace*{1.5cm}\ls \left (\int_{S^{n-1}}\rho_K^{n-1}(\theta )\,d\sigma (\theta )\right )^{\frac{n-k-1}{n-1}}= \left (\frac{{\rm as}(K)}{\omega_{n-1}}\right )^{\frac{n-k-1}{n-1}},
\end{align}
therefore
\begin{equation}\label{eq:BGL-14}\int_{{\rm Gr}_{n-k}}{\rm as}(K\cap E)\,d\nu_{n-k} (E)\ls \varrho_{n,k} \,{\rm as}(K)^{\frac{n-k-1}{n-1}},\end{equation}
where $\varrho_{n,k}=\omega_{n-k-1}\cdot \omega_{n-1}^{-\frac{n-k-1}{n-1}}\ls c^k$ for an absolute constant $c>0$, which gives \eqref{eq:BGL-10b}. \end{proof}

\medskip

Let $K$ be a convex body in ${\mathbb R}^n$ with $0\in {\rm int}(K)$. Recall that the radius $R(K)$ of $K$ is the smallest $R>0$ for which $K\subseteq RB_2^n$.
Using the monotonicity and homogeneity of dual mixed volumes and \eqref{eq:DMX-3} we may write
\begin{align}\label{eq:as-radius-0}{\rm as}(K) &= \frac{\omega_{n-1}}{\omega_n}\tilde{V}(K,\ldots ,K,B_2^n)\gr
\frac{\omega_{n-1}}{\omega_n R(K)}\tilde{V}(K,\ldots ,K,R(K)B_2^n)\\
\nonumber &\gr \frac{\omega_{n-1}}{\omega_n R(K)}\tilde{V}(K,\ldots ,K,K)=\frac{\omega_{n-1}}{\omega_n R(K)}\,|K|.
\end{align}
In this way we obtain the following general lower bound for ${\rm as}(K)$.

\begin{lemma}\label{lem:as-radius-2}Let $K$ be a centered convex body in ${\mathbb R}^n$. If we define
$p(K)=R(K)/|K|^{\frac{1}{n}}$ then
\begin{equation}\label{eq:as-radius-3}\frac{c\sqrt{n}}{p(K)}\ls \frac{{\rm as}(K)}{|K|^{\frac{n-1}{n}}},\end{equation}
where $c>0$ is an absolute constant.
\end{lemma}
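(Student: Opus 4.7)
The plan is to observe that the bulk of the work has already been done in the computation \eqref{eq:as-radius-0} which appears immediately above the lemma statement, so only a short extra step is needed.

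First I would rewrite the conclusion of \eqref{eq:as-radius-0}, namely
\begin{equation*}
{\rm as}(K)\;\gr\;\frac{\omega_{n-1}}{\omega_n\,R(K)}\,|K|,
\end{equation*}
by dividing both sides by $|K|^{(n-1)/n}$. This produces
\begin{equation*}
\frac{{\rm as}(K)}{|K|^{(n-1)/n}}\;\gr\;\frac{\omega_{n-1}}{\omega_n}\cdot\frac{|K|^{1/n}}{R(K)}\;=\;\frac{\omega_{n-1}}{\omega_n}\cdot\frac{1}{p(K)},
\end{equation*}
where I used the definition $p(K)=R(K)/|K|^{1/n}$ directly.

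Second, I would verify the asymptotic $\omega_{n-1}/\omega_n \simeq \sqrt{n}$. Using the formula $\omega_m=\pi^{m/2}/\Gamma(m/2+1)$, one has
\begin{equation*}
\frac{\omega_{n-1}}{\omega_n}\;=\;\frac{\Gamma(n/2+1)}{\sqrt{\pi}\,\Gamma((n+1)/2)},
\end{equation*}
and Stirling's formula (or the elementary bound $\Gamma(x+1/2)/\Gamma(x)\simeq\sqrt{x}$) gives $\omega_{n-1}/\omega_n\simeq \sqrt{n}$ with absolute constants. Substituting this into the previous display yields the desired bound $c\sqrt{n}/p(K)\ls {\rm as}(K)/|K|^{(n-1)/n}$.

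There is really no obstacle here: the genuine content of the lemma, namely the use of monotonicity and homogeneity of the dual mixed volume $\tilde V(K,\ldots,K,\cdot)$ together with the inclusion $K\subseteq R(K)B_2^n$, was already carried out in \eqref{eq:as-radius-0}. The lemma is just the repackaging of that estimate in scale-invariant form, together with the standard asymptotic for the ratio of consecutive ball volumes.
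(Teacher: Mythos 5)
Your proof is correct and is essentially identical to the paper's: both take the lower bound ${\rm as}(K)\gr \frac{\omega_{n-1}}{\omega_n R(K)}|K|$ from \eqref{eq:as-radius-0}, divide by $|K|^{\frac{n-1}{n}}$, and use $\omega_{n-1}/\omega_n\simeq\sqrt{n}$ together with the definition of $p(K)$. No issues.
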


\begin{proof}From \eqref{eq:as-radius-0} we see that
\begin{equation}\label{eq:as-radius-5}\frac{{\rm as}(K)}{|K|^{\frac{n-1}{n}}}\gr \frac{\omega_{n-1}}{\omega_n R(K)}|K|^{1/n}\gr \frac{c\sqrt{n}}{R(K)}|K|^{1/n},\end{equation}
and the lemma follows by the definition of $p(K)$. \end{proof}

\medskip

Going back to Theorem \ref{th:BGL-2} we immediately get the following.

\begin{theorem}\label{th:as-radius-3}Let $K$ be a centered convex body in ${\mathbb R}^n$. Then, for every $1\ls k\ls n-1$ we have that
\begin{equation}\label{eq:as-radius-6}\left (\frac{c_1\sqrt{n}}{p(K)}\right )^k\,{\rm as}(K)\ls |K|^{\frac{k}{n}}\,\int_{{\rm Gr}_{n-k}}{\rm as}(K\cap E)\,d\nu_{n-k}(E)\ls \left (\frac{c_2p(K)}{\sqrt{n}}\right )^{\frac{k}{n-1}}\,{\rm as}(K),\end{equation}
where $c_1, c_2>0$ are absolute constants.
\end{theorem}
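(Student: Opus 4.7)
The plan is to deduce both halves of Theorem \ref{th:as-radius-3} essentially as a routine combination of Theorem \ref{th:BGL-2} (the two-sided H\"older-type comparison between ${\rm as}(K)$ and the average $\int {\rm as}(K\cap E)\,d\nu_{n-k}$) with Lemma \ref{lem:as-radius-2} (the $R(K)$-based lower bound ${\rm as}(K)/|K|^{(n-1)/n}\gtrsim \sqrt{n}/p(K)$). The key observation is that each of these bounds contributes exactly the right factor: Theorem \ref{th:BGL-2} governs the interplay between $\int {\rm as}(K\cap E)\,d\nu_{n-k}$ and powers of ${\rm as}(K)$, while Lemma \ref{lem:as-radius-2} converts the ratio ${\rm as}(K)/|K|^{(n-1)/n}$ into the geometric quantity $\sqrt{n}/p(K)$ that appears in the statement.

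For the left inequality, I would start from \eqref{eq:BGL-10}, namely ${\rm as}(K)^{k+1}\le c^k|K|^k\int_{{\rm Gr}_{n-k}}{\rm as}(K\cap E)\,d\nu_{n-k}(E)$. Dividing both sides by $|K|^{k(n-1)/n}$ rearranges this as
\[
|K|^{k/n}\int_{{\rm Gr}_{n-k}}{\rm as}(K\cap E)\,d\nu_{n-k}(E)\;\gr\; c^{-k}\Big(\frac{{\rm as}(K)}{|K|^{(n-1)/n}}\Big)^{\!k}{\rm as}(K).
\]
Plugging in Lemma \ref{lem:as-radius-2}, which says ${\rm as}(K)/|K|^{(n-1)/n}\gr c\sqrt{n}/p(K)$, yields precisely the desired lower bound with $c_1$ absorbing the two absolute constants.

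For the right inequality, I would begin from \eqref{eq:BGL-10b}, namely $\int_{{\rm Gr}_{n-k}}{\rm as}(K\cap E)\,d\nu_{n-k}(E)\le c^k{\rm as}(K)^{(n-k-1)/(n-1)}$, multiply both sides by $|K|^{k/n}$, and then rewrite the right-hand side algebraically as
\[
|K|^{k/n}{\rm as}(K)^{(n-k-1)/(n-1)}\;=\;\Big(\frac{|K|^{(n-1)/n}}{{\rm as}(K)}\Big)^{\!k/(n-1)}\!{\rm as}(K).
\]
Applying Lemma \ref{lem:as-radius-2} in its reciprocal form $|K|^{(n-1)/n}/{\rm as}(K)\le p(K)/(c\sqrt{n})$ then produces the upper bound of the stated shape $(c_2 p(K)/\sqrt{n})^{k/(n-1)}{\rm as}(K)$.

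The main obstacle is bookkeeping with the constants: both applications of Theorem \ref{th:BGL-2} carry a $c^k$ factor, and one has to verify that these factors can be absorbed into the powers $(c_1\sqrt{n}/p(K))^k$ and $(c_2 p(K)/\sqrt{n})^{k/(n-1)}$ respectively. The lower bound does this cleanly because the $c^{-k}$ from Theorem \ref{th:BGL-2}(i) merges naturally with the $k$-th power of the factor supplied by Lemma \ref{lem:as-radius-2}; in the upper bound one simply writes $c^k$ as $(c^{n-1})^{k/(n-1)}$ and lets the resulting constant be absorbed into $c_2$.
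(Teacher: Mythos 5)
Your proposal is correct and follows essentially the same route as the paper: the left inequality is obtained by combining \eqref{eq:BGL-10} with Lemma \ref{lem:as-radius-2}, and the right inequality by rewriting $|K|^{k/n}=\big(|K|^{(n-1)/n}\big)^{k/(n-1)}$, bounding it via the reciprocal form of Lemma \ref{lem:as-radius-2}, and invoking \eqref{eq:BGL-10b}. The constant bookkeeping you describe is exactly what the paper does as well.
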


\begin{proof}The left hand side inequality follows from Lemma \ref{lem:as-radius-2} and \eqref{eq:BGL-10}. We have
$$|K|^{\frac{k(n-1)}{n}}\left (\frac{c_1\sqrt{n}}{p(K)}\right )^k{\rm as}(K)\ls {\rm as}(K)^{k+1}\ls c^k|K|^k\int_{{\rm Gr}_{n-k}}{\rm as}(K\cap E)\,d\nu_{n-k}(E),$$
which implies that
$$\left (\frac{c_1\sqrt{n}}{cp(K)}\right )^k\,{\rm as}(K)\ls |K|^{\frac{k}{n}}\,\int_{{\rm Gr}_{n-k}}{\rm as}(K\cap E)\,d\nu_{n-k}(E).$$
Next, we observe that
\begin{equation}\label{eq:as-radius-7}|K|^{\frac{k}{n}}=\left (|K|^{\frac{n-1}{n}}\right )^{\frac{k}{n-1}}\ls \left (\frac{p(K){\rm as}(K)}{c\sqrt{n}}\right )^{\frac{k}{n-1}},\end{equation}
which implies that
\begin{equation}\label{eq:as-radius-8}|K|^{\frac{k}{n}}\,{\rm as}(K)^{\frac{n-k-1}{n-1}}\ls \left (\frac{c_2p(K)}{\sqrt{n}}\right )^{\frac{k}{n-1}}{\rm as}(K).\end{equation}
Then, the right hand side inequality of \eqref{eq:as-radius-6} follows from \eqref{eq:BGL-10b}
in Theorem \ref{th:BGL-2}. \end{proof}

\begin{remark}\label{rem:classical}\rm We will discuss the estimates that one can get from Theorem \ref{th:as-radius-3} if the centered convex body $K$ in ${\mathbb R}^n$
is assumed to be in some of the classical positions; we introduce these below. For a detailed presentation and references see \cite{AGA-book}.
\begin{enumerate}
\item[(i)] We say that $K$ is in minimal mean width position if $w(K)\ls w(T(K))$ for every $T\in SL(n)$. It was proved by V.~Milman and the second named author
that $K$ has minimal mean width if and only if
\begin{equation}\label{eq:notation-17}w(K)=n\int_{S^{n-1}}h_K(\theta )\langle \xi ,\theta\rangle^2d\sigma (\theta )\end{equation}
for every $\xi\in S^{n-1}$. From results of Figiel-Tomczak, Lewis and Pisier (see \cite[Chapter 6]{AGA-book})
we know that if a convex body $K$ in ${\mathbb R}^n$ has minimal mean width then $w(K)\ls c|K|^{\frac{1}{n}}\sqrt{n}\log n$.
From the general fact that $R(K)\ls c\sqrt{n}w(K)$ for every centered convex body, we conclude that $R(K)\ls c|K|^{\frac{1}{n}}n\log n$
in the minimal mean width position.
\item[(ii)] We say that $K$ is in John's position if the ellipsoid of maximal volume inscribed in $K$ is a multiple of the Euclidean unit ball $B_2^n$
and that $K$ is in L\"{o}wner's position if the ellipsoid of minimal volume containing $K$ is a multiple of the Euclidean unit ball
$B_2^n$. One can check that this holds true if and only if $K^{\circ }$ is in John's position.
The volume ratio of a centered convex body $K$ in ${\mathbb R}^n$ is the quantity
\begin{equation}\label{eq:notation-18}{\rm vr}(K)=\inf\left\{ \left(\frac{|K|}{|{\cal E}|}\right )^{\frac{1}{n}}:{\cal E}\;\hbox{is an ellipsoid and}\;
{\cal E}\subseteq K\right\}.\end{equation}
The outer volume ratio of a centered convex body $K$ in ${\mathbb R}^n$ is the quantity ${\rm ovr}(K)={\rm vr}(K^{\circ })$.
K.~Ball proved in \cite{Ball-reverse} that ${\rm vr}(K)\ls {\rm vr}(C_n)\simeq \sqrt{n}$ in the symmetric case and
${\rm vr}(K)\ls {\rm vr}(\Delta_n)\simeq \sqrt{n}$ in the not necessarily symmetric case, where $C_n=[-1,1]^n$ and $\Delta_n$ is a regular simplex in
${\mathbb R}^n$. Assume that $K$ is in John's position. Then, from a theorem of Barthe \cite{Barthe-width} we know that
if $\Delta_n$ is the regular simplex whose maximal volume ellipsoid is $B_2^n$
and $rB_2^n$ is the maximal volume ellipsoid of $K$ we have $w(r^{-1}K)\ls w(\Delta_n)\ls c\sqrt{\log n}$. Since
$|K|^{1/n}\gr r|B_2^n|^{1/n}\gr cr/\sqrt{n}$, we get
\begin{equation}R(K)\ls c\sqrt{n}w(K)=cr\sqrt{n}w(r^{-1}K)\ls cr\sqrt{n\log n}\ls c^{\prime }|K|^{\frac{1}{n}}n\sqrt{\log n}.\end{equation}
Next, assume that $K$ is in L\"{o}wner's position; we know that $R(K)B_2^n$ is the minimal volume ellipsoid of $K$, and hence
\begin{equation}R(K)|B_2^n|^{1/n}=|K|^{\frac{1}{n}}{\rm ovr}(K)=|K|^{\frac{1}{n}}{\rm vr}(K^{\circ })\ls c\sqrt{n}|K|^{\frac{1}{n}},\end{equation}
which implies that $R(K)\ls cn|K|^{\frac{1}{n}}$.
\item[(iii)] We say that $K$ has minimal surface area if $S(K)\ls S(T(K))$ for every $T\in SL(n)$. Recall that the area measure $\sigma_K$ of $K$ is the Borel measure
on $S^{n-1}$ defined by
\begin{equation}\sigma_K(A)=\lambda (\{x\in {\rm bd}(K):\;{\rm the}\;{\rm outer}
\;{\rm normal}\;{\rm to}\;K\;{\rm at}\;x\;{\rm belongs}\;
{\rm to}\;A\}),\end{equation}
where $\lambda $ is the usual surface measure on $K$. Petty proved in \cite{Petty-1961} that $K$ has minimal surface
area if and only if $\sigma_K$ satisfies the isotropic condition
\begin{equation}\label{eq:notation-19}S(K)=n\int_{S^{n-1}}\langle \xi ,\theta\rangle^2d\sigma_K (\theta )\end{equation}
for every $\xi\in S^{n-1}$. It is known that if $K$ has minimal surface area then $w(K)\ls cn|K|^{\frac{1}{n}}$ (this was observed by Markessinis, Paouris
and Saroglou in \cite{Markessinis-Paouris-Saroglou-2012}). Therefore, $R(K)\ls cn^{\frac{3}{2}}|K|^{\frac{1}{n}}$.
\item[(iv)] Finally, if $K$ is in the isotropic position then we know that $R(K)\ls |K|^{\frac{1}{n}}(n+1)L_K$.
This estimate is due to Kannan, Lov\'{a}sz and Simonovits (the asymptotically sharp bound $R(K)\ls cnL_K\,|K|^{\frac{1}{n}}$ can
be obtained with an elementary argument).
\end{enumerate}
\end{remark}

Since $R(K)$ is polynomial in $n$ for all the classical positions of a convex body $K$, from the right hand side inequality \eqref{eq:as-radius-6}
of Theorem \ref{th:as-radius-3} we obtain the next result.

\begin{theorem}\label{th:as-radius-4}Let $K$ be a centered convex body in ${\mathbb R}^n$. If $K$ is in any of the classical positions 
that we discussed in Remark $\ref{rem:classical}$, then
\begin{equation}\label{eq:as-radius-9}|K|^{\frac{k}{n}}\,\int_{{\rm Gr}_{n-k}}{\rm as}(K\cap E)\,d\nu_{n-k}(E)\ls C^k\,{\rm as}(K)\end{equation}
for all $1\ls k\ls n-1$, where $C>0$ is an absolute constant.
\end{theorem}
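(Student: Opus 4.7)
The plan is to apply directly the right-hand inequality of Theorem \ref{th:as-radius-3} and then verify that, in each of the classical positions listed in Remark \ref{rem:classical}, the quantity $p(K)=R(K)/|K|^{1/n}$ grows at most polynomially in $n$. From Theorem \ref{th:as-radius-3} we have
\begin{equation*}
|K|^{\frac{k}{n}}\int_{\mathrm{Gr}_{n-k}}{\rm as}(K\cap E)\,d\nu_{n-k}(E)\ls \left(\frac{c_2p(K)}{\sqrt{n}}\right)^{\frac{k}{n-1}}{\rm as}(K),
\end{equation*}
so it suffices to show that for any $K$ in a classical position one has $(c_2 p(K)/\sqrt{n})^{k/(n-1)}\ls C^{k}$ for an absolute constant $C>0$ and all $1\ls k\ls n-1$.

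The second step is a case-by-case inspection of the bounds in Remark \ref{rem:classical}. For $K$ in the minimal mean width position we have $R(K)\ls c_1 n\log n\cdot |K|^{1/n}$; in John's position, $R(K)\ls c_1 n\sqrt{\log n}\cdot |K|^{1/n}$, and in L\"owner's position, $R(K)\ls c_1 n\cdot |K|^{1/n}$; in the minimal surface area position, $R(K)\ls c_1 n^{3/2}\cdot |K|^{1/n}$; in the isotropic position, $R(K)\ls c_1(n+1)L_K\cdot |K|^{1/n}\ls c_1 n^{5/4}\cdot|K|^{1/n}$, where the last bound uses Klartag's estimate $L_K\ls c\sqrt[4]{n}$ recalled in \eqref{eq:intro-11}. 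In every case we thus obtain $p(K)\ls P(n)$ for a function $P$ of the form $P(n)=c_1 n^{A}(\log n)^{B}$ with absolute constants $A$ and $B$.

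The third step, which is the only computational point, is the elementary observation that $n\mapsto [P(n)/\sqrt{n}\,]^{1/(n-1)}$ is bounded by an absolute constant for $n\gr 2$. Indeed,
\begin{equation*}
\left(\frac{c_2 P(n)}{\sqrt{n}}\right)^{\frac{k}{n-1}}=\exp\!\left(\frac{k}{n-1}\Bigl[\log(c_1c_2)+\bigl(A-\tfrac{1}{2}\bigr)\log n+B\log\log n\Bigr]\right),
\end{equation*}
and since $\log n/(n-1)$ and $\log\log n/(n-1)$ are bounded by absolute constants on $\{n\gr 2\}$, the right-hand side is at most $e^{Ck}=C^{\prime k}$ for some absolute constant $C^{\prime}>0$. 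Combining this with the displayed inequality above yields the claimed bound.

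I do not anticipate a genuine obstacle; the content of the theorem is entirely contained in Theorem \ref{th:as-radius-3} and Remark \ref{rem:classical}. The only place where one has to be a little careful is in verifying uniformly in $n$ and $k$ that the polynomial-in-$n$ factor $P(n)/\sqrt n$ collapses to an absolute constant once raised to the power $k/(n-1)$; but this reduces to the elementary estimate above and does not require any additional geometric input.
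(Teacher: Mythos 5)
Your proposal is correct and follows exactly the paper's (one-line) argument: apply the right-hand inequality of Theorem \ref{th:as-radius-3} and use the polynomial bounds on $R(K)/|K|^{1/n}$ from Remark \ref{rem:classical}, noting that a polynomial in $n$ raised to the power $k/(n-1)$ is at most $C^k$. The case-by-case verification and the elementary exponent estimate you spell out are precisely what the paper leaves implicit.
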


\begin{remark}\rm Similarly, from the left hand side inequality \eqref{eq:as-radius-6}
of Theorem \ref{th:as-radius-3} we see that if $K$ is in some of the classical positions that we discussed in Remark \ref{rem:classical}
then
\begin{equation}\label{eq:as-radius-10}c_n^{-k}\,{\rm as}(K)\ls |K|^{\frac{k}{n}}\,\int_{{\rm Gr}_{n-k}}{\rm as}(K\cap E)\,d\nu_{n-k}(E)
\end{equation}
for every $1\ls k\ls n-1$, where $c_n\simeq\sqrt{n}$ if $K$ is in L\"{o}wner's position, $c_n\simeq\sqrt{n\log n}$ if $K$ is in John's position, $c_n\simeq\sqrt{n}(\log n)$ if $K$ is in the
minimal mean width position, $c_n\simeq n$ if $K$ is in the minimal surface area position, and $c_n\simeq\sqrt{n}L_K$ if $K$ is in the isotropic position.
\end{remark}

\bigbreak

\bigskip

\bigskip

%%%%%%%%%%%%%%%%%%%%%%%%%%%%%%%%%%%%%%%%%%%%%%%%%%%%%%%%%%%%%%%%%%%%%%%%%%%%%%%%%%%%%%%%%%%%%%%%%%%%%%%%%%%%%%%%%%%%%%%%%%%%%%%%%%%%%%
\noindent {\bf Acknowledgements.}  The first named author acknowledges support from Onassis Foundation. The fourth named author
was partially supported by the US National Science Foundation grant DMS-1265155.
%%%%%%%%%%%%%%%%%%%%%%%%%%%%%%%%%%%%%%%%%%%%%%%%%%%%%%%%%%%%%%%%%%%%%%%%%%%%%%%%%%%%%%%%%%%%%%%%%%%%%%%%%%%%%%%%%%%%%%%%%%%%%%%%%%%%%%

\bigskip

\bigskip

\footnotesize
\bibliographystyle{amsplain}

\bigskip

\bigskip

\noindent \textsc{Silouanos \ Brazitikos}: Department of
Mathematics, National and Kapodistrian University of Athens, Panepistimiopolis 157-84,
Athens, Greece.

\smallskip

\noindent \textit{E-mail:} \texttt{silouanb@math.uoa.gr}

\bigskip 

\noindent \textsc{Susanna \ Dann}: Institute of Discrete Mathematics and Geometry, Vienna University of
Technology, Wiedner Hauptstrasse 8-10, 1040 Vienna, Austria.

\smallskip 

\noindent \textit{E-mail:} \texttt{susanna.dann@tuwien.ac.at}

\bigskip

\noindent \textsc{Apostolos \ Giannopoulos}: Department of
Mathematics, National and Kapodistrian University of Athens, Panepistimiopolis 157-84,
Athens, Greece.

\smallskip

\noindent \textit{E-mail:} \texttt{apgiannop@math.uoa.gr}

\bigskip

\noindent \textsc{Alexander \ Koldobsky}: Department of
Mathematics, University of Missouri, Columbia, MO 65211.

\smallskip

\noindent \textit{E-mail:} \texttt{koldobskiya@missouri.edu}

\end{document}